\title{Generalized Dyck tilings}
\author{Matthieu Josuat-Verg\`es and Jang Soo Kim}
\thanks{The first author was partially supported by the ANR project
  CARMA.  The second author was supported by Basic Science Research
  Program through the National Research Foundation of Korea (NRF)
  funded by the Ministry of Education (NRF-2013R1A1A2061006).}
\subjclass[2000]{05A15, 05E15}
\newtheorem{thm}{Theorem}[section]
\newtheorem{lem}[thm]{Lemma}
\newtheorem{prop}[thm]{Proposition}
\newtheorem{cor}[thm]{Corollary}
\theoremstyle{definition}
\newtheorem{defn}[thm]{Definition}
\newtheorem{problem}{Problem}
\theoremstyle{remark}
\newtheorem{remark}{Remark}
\newcommand\D{\mathcal{D}}
\newcommand\kH{\mathcal{H}^{(k)}}
\newcommand\kD{\D^{(k)}}
\newcommand\kS{\mathfrak{S}^{(k)}}
\newcommand\Sym{\mathfrak{S}}
\newcommand\tiles{\mathrm{tiles}}
\newcommand\INC{\mathrm{INC}}
\newcommand\MARK{\mathrm{MARK}}
\newcommand\post{\mathrm{post}}
\newcommand\pre{\mathrm{pre}}
\newcommand\sym{\mathrm{sym}}
\newcommand\UP{\mathrm{UP}}
\newcommand\NC{\mathrm{NC}}
\newcommand\Nest{\mathrm{Nest}}
\newcommand\M{\mathcal{M}}
\newcommand\SP{\mathfrak{S}^{(k)}}
\newcommand\HH{\mathcal{H}}
\newcommand\LL{\mathcal{L}}
\newcommand\area{\mathrm{area}}
\newcommand\cro{\mathrm{cr}}
\newcommand\sscr{\mathrm{sscr}}
\newcommand\wt{\mathrm{wt}}
\newcommand\kDyck{\mathrm{Dyck}^{(k)}}
\def\sch.{Schr{\"o}der}
\newcommand\HT{\mathrm{ht}}
\newcommand\lm{\lambda/\mu}
\newcommand\qint[1]{\left[ #1\right]_q}
\newcommand\Qbinom[3]{\genfrac{[}{]}{0pt}{}{#1}{#2}_{#3}}
\newcommand\qbinom[2]{\Qbinom{#1}{#2}{q}}
\newcommand\inv{\mathrm{inv}}
\newcommand\INV{\mathrm{INV}}
\newcommand\art{\mathrm{art}}
\newcommand\pos{\mathrm{pos}}
\newcommand\norm[1]{\left\lVert#1\right\rVert}
\def\cross(#1,#2){\rput(#1,#2){$\times$}}
\def\kdyckgrid#1#2{
\psgrid(0,0)(#1,#2)
\psline[linewidth=.1pt, linecolor=gray](0,0)(#1,#2)
}
\def\cvput#1[#2]{\pnode(#1,1){#1} \pscircle*(#1,1){.1} \rput(#1,.5){$#2$}}
\begin{document}

\begin{abstract}
Recently, Kenyon and Wilson introduced Dyck tilings, which are certain
tilings of the region between two Dyck paths.  The enumeration of Dyck
tilings is related with hook formulas for forests and the
combinatorics of Hermite polynomials.  The first goal of this work is
to give an alternative point of view on Dyck tilings by making use of
the weak order and the Bruhat order on permutations.  Then we
introduce two natural generalizations: $k$-Dyck tilings and symmetric
Dyck tilings.  We are led to consider Stirling permutations, and
define an analog of the Bruhat order on them.  We show that certain
families of $k$-Dyck tilings are in bijection with intervals in this
order.  We also enumerate symmetric Dyck tilings.
\end{abstract}

\maketitle


\section{Introduction}

Dyck tilings were recently introduced by Kenyon and Wilson
\cite{Kenyon2011} in the study of probabilities of statistical physics
model called ``double-dimer model'', and independently by Shigechi and
Zinn-Justin \cite{Shigechi2012} in the study of Kazhdan-Lusztig
polynomials. Dyck tilings also have connection with fully packed loop
configurations \cite{Fischer2012} and representations of the symmetric
group \cite{Fayers2013}.

The main purpose of this paper is to give a new point of view on Dyck
tilings in terms of the weak order and the Bruhat order on
permutations and to consider two natural generalizations of Dyck
tilings.

A \emph{Dyck path of length $2n$} is a lattice path consisting of up
steps $(0,1)$ and down steps $(1,0)$ from the origin $(0,0)$ to the
point $(n,n)$ which never goes strictly below the line $y=x$. We will also
consider a Dyck path $\lambda$ of length $2n$ as the Young diagram
whose boundary is determined by $\lambda$ and the lines $x=0$ and $y=n$.

Suppose that $\lambda$ and $\mu$ are Dyck paths of length $2n$ with
$\mu$ weakly above $\lambda$.  A \emph{Dyck tile} is a ribbon such that
the center of the cells form a Dyck path. A \emph{Dyck tiling} of $\lm$ is a
tiling $D$ of the region between $\lambda$ and $\mu$ with Dyck tiles
satisfying the \emph{cover-inclusive property}: if $\eta$ is a tile of
$D$, then the translation of $\eta$ by $(1,-1)$ is either completely
below $\lambda$ or contained in another tile of $D$. See
Figure~\ref{fig:Dyck_tiling} for an example. We denote by $\D(\lm)$
the set of Dyck tilings of $\lm$.  For $D\in\D(\lm)$ we call $\lambda$
and $\mu$ the \emph{lower path} and the \emph{upper path} of $D$,
respectively.  Then the set of Dyck tilings with fixed upper path
$\lambda$ is denoted by $\D(\lambda/*)$ and similarly, the set of Dyck
tilings with fixed lower path $\mu$ is denoted by $\D(*/\mu)$. 

For $D\in \D(\lm)$ we have two natural statistics: the area $\area(D)$
of the region $\lm$ and the number $\tiles(D)$ of tiles of $D$. We
also consider the statistic $\art(D)=(\area(D)+\tiles(D))/2$.

\begin{figure}
  \centering
  \begin{pspicture}(0,0)(11,11) \kdyckgrid{11}{11} \psline(0,0)(0,
    1)(0, 2)(0, 3)(0, 4)(0, 5)(0, 6)(0, 7)(0, 8)(0, 9)(1, 9)(1, 10)(2,
    10)(3, 10)(3, 11)(4, 11)(5, 11)(6, 11)(7, 11)(8, 11)(9, 11)(10,
    11)(11, 11)\psline(0,0)(0, 1)(0, 2)(0, 3)(1, 3)(2, 3)(3, 3)(3,
    4)(3, 5)(4, 5)(5, 5)(5, 6)(5, 7)(6, 7)(6, 8)(7, 8)(7, 9)(8, 9)(9,
    9)(9, 10)(9, 11)(10, 11)(11, 11)\pspolygon(3, 3)(2, 3)(2, 4)(2,
    5)(2, 6)(3, 6)(4, 6)(4, 7)(4, 8)(5, 8)(5, 9)(6, 9)(6, 10)(7,
    10)(8, 10)(9, 10)(9, 9)(8, 9)(7, 9)(7, 8)(6, 8)(6, 7)(5, 7)(5,
    6)(5, 5)(4, 5)(3, 5)(3, 4)(3, 3)\pspolygon(9, 10)(8, 10)(8, 11)(9,
    11)\pspolygon(8, 10)(7, 10)(7, 11)(8, 11)\pspolygon(2, 3)(1, 3)(1,
    4)(2, 4)\pspolygon(2, 4)(1, 4)(1, 5)(1, 6)(1, 7)(2, 7)(3, 7)(4,
    7)(4, 6)(3, 6)(2, 6)(2, 5)(2, 4)\pspolygon(4, 7)(3, 7)(3, 8)(3,
    9)(4, 9)(4, 10)(5, 10)(5, 11)(6, 11)(7, 11)(7, 10)(6, 10)(6, 9)(5,
    9)(5, 8)(4, 8)(4, 7)\pspolygon(3, 7)(2, 7)(2, 8)(3,
    8)\pspolygon(3, 8)(2, 8)(2, 9)(2, 10)(3, 10)(3, 11)(4, 11)(5,
    11)(5, 10)(4, 10)(4, 9)(3, 9)(3, 8)\pspolygon(1, 3)(0, 3)(0, 4)(1,
    4)\pspolygon(1, 4)(0, 4)(0, 5)(1, 5)\pspolygon(1, 5)(0, 5)(0,
    6)(1, 6)\pspolygon(1, 6)(0, 6)(0, 7)(0, 8)(1, 8)(2, 8)(2, 7)(1,
    7)(1, 6)\pspolygon(2, 8)(1, 8)(1, 9)(2, 9)\pspolygon(2, 9)(1,
    9)(1, 10)(2, 10)\pspolygon(1, 8)(0, 8)(0, 9)(1, 9)\end{pspicture}
  \caption{An example of Dyck tiling.}
  \label{fig:Dyck_tiling}
\end{figure}

Kenyon and Wilson \cite{Kenyon2011} conjectured the following two
formulas:
\begin{equation}
  \label{eq:KW_lambda}
\sum_{D\in \D(\lambda/*)} q^{\art(D)} = 
\frac{[n]_q!}{\prod_{x\in F} [h_x]_q},
\end{equation}
\begin{equation}
  \label{eq:KW_mu}
\sum_{D\in \D(*/\mu)} q^{\tiles(D)} = \prod_{u\in\UP(\mu)} [\HT(u)]_q,
\end{equation}
where $F$ is the plane forest corresponding to $\lambda$ and, for a
vertex $x\in F$, $h_x$ denotes the hook length of $x$.  The set of up
steps of a Dyck path $\mu$ is denoted by $\UP(\mu)$ and for
$u\in\UP(\mu)$, $\HT(u)$ is the number of squares between $u$ and
the line $y=x$ plus 1.  Here we use the standard notation for
$q$-integers and $q$-factorials: $[n]_q=1+q+q^2+\dots+q^{n-1}$ and $[n]_q! = [1]_q
[2]_q \dots[n]_q$.

Formula~\eqref{eq:KW_lambda} was first proved by Kim \cite{JSK_DT}
non-bijectively and then by Kim, M\'esz\'aros, Panova, and Wilson
\cite{KMPW} bijectively. In \cite{KMPW}, they find a bijection between
$\D(\lambda/*)$ and increasing labelings of the plane forest
corresponding to $\lambda$. Kim~\cite{JSK_DT} and Konvalinka
independently proved \eqref{eq:KW_mu} by finding a bijection between
$\D(*/\mu)$ and certain labelings of $\mu$ called Hermite histories.

Bj\"orner and Wachs showed that the right hand side of
\eqref{eq:KW_lambda} is the length generating function for
permutations in an interval in the weak order, see
\cite[Theorem~6.8]{Bjorner1991} and \cite[Theorem~6.1]{Bjorner1989}.

In this paper we first show that, using the results of Bj\"orner and
Wachs \cite{Bjorner1989,Bjorner1991}, \eqref{eq:KW_lambda} can be
interpreted as the length generating function for permutations
$\pi\ge_L\sigma$ in the left weak order for a 312-avoiding permutation
$\sigma$. We also show that \eqref{eq:KW_mu} is the length generating
function for permutations $\pi\ge \sigma$ in the Bruhat order for a
132-avoiding permutation $\sigma$.  We then consider two natural
generalizations of Dyck tilings, namely, $k$-Dyck tilings and
symmetric Dyck tilings.

The first generalization is $k$-Dyck tiling, where we use $k$-Dyck
paths and $k$-Dyck tiles with the same cover-inclusive property.  We
generalize \eqref{eq:KW_mu} by finding a bijection between $k$-Dyck
tilings and $k$-Hermite histories. We consider $k$-Stirling
permutations introduced by Gessel and Stanley \cite{Gessel1978}. We
define a $k$-Bruhat order on $k$-Stirling permutations and show that
$k$-Dyck tilings with fixed upper path are in bijection with an
interval in this order. We also consider a connection with $k$-regular
noncrossing partitions.  We generalize \eqref{eq:KW_lambda} to
$k$-Dyck tilings with fixed lower path $\lambda$ when $\lambda$ is a
zigzag path.

The second generalization is symmetric Dyck tiling, which is invariant
under the reflection along a line. We show that symmetric Dyck tilings
are in bijection with symmetric matchings and ``marked'' increasing
labelings of symmetric forests. 

\section{Dyck tilings as intervals of the Bruhat order and weak order}

As we have seen in the introduction, the two natural points of view
for enumerating Dyck tilings are when we fix the upper path, and when
we fix the lower path. We show in this section that both can be
interpreted in terms of permutations, using respectively the Bruhat
order and the (left) weak order, see \cite{BjornerBrenti}.

We begin with the case of a fixed upper path, and the Bruhat order.

We denote by $\Sym_n$ the set of permutations of
$[n]:=\{1,2,\dots,n\}$. An \emph{inversion} of $\pi\in\Sym_n$ is a
pair $(i,j)$ of integers $1\le i < j\le n$ such that
$\pi(i)>\pi(j)$. The number of inversions of $\pi$ is denoted by
$\inv(\pi)$.  For a permutation $\tau$, the set of $\tau$-avoiding
permutations in $\Sym_n$ is denoted by $\Sym_n(\tau)$.
For example if $\tau =132$, $\sigma\in \Sym_n(132)$ if there is no 
$i<j<k$ such that $\sigma_i<\sigma_k<\sigma_j$.

We represent a permutation by a diagram with the ``matrix convention'', i.e. there is
a dot at the intersection of the $i$th line from the top and the $j$th column from the left
if $\sigma(j)=i$. In these diagrams, we can represent the inversion of a permutation
by putting a cross $\times$ in each cell having a dot to its right and a dot below.
See the left part of Figure~\ref{bijcatalan}.
We need a bijection $\alpha$ between 132-avoiding permutations and Dyck paths. It
is easy to see that the inversions of a 132-avoiding permutation are top left justified
in its diagram. So we can define a path from the bottom left corner to the top right corner
by following the boundary of the region filled with $\times$. This turns out to be a Dyck path and 
this defines a bijection (this is an easy exercise).
The bijection is illustrated in Figure~\ref{bijcatalan}.

\begin{defn}
Let $\mu$ be a Dyck path. 
\begin{itemize}
\item
A {\it Hermite history} of shape $\mu$ is a 
labelling of the up steps of $\mu$ with integers such that 
a step starting at height $h$ has a label between $0$ and $h$.
\item
A {\it matching} of shape $\mu$ is a partition of $[n]$ in 2-element blocks
such that $i\in[n]$ is the minimum of a block if and only if the $i$th step 
of $\mu$ is an up step. A {\it crossing} of the matching is a pair of blocks
$\{i,j\}$ and $\{k,\ell\}$ such that $i<j<k<\ell$.
\end{itemize}
\end{defn}

The following is well known (see for example \cite{JSK_DT}).

\begin{prop}
There is a bijection between Hermite histories of shape $\mu$ and matchings of shape $\mu$.
It is such that the sum of weights in the Hermite history is the number of crossings in the matching. 
\end{prop}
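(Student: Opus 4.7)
The plan is to construct the bijection explicitly by scanning $\mu$ from right to left, using a stack to record the pending (yet-unmatched) down steps. Concretely, I process the $2n$ steps of $\mu$ in the order $2n, 2n-1, \ldots, 1$: at each down step, push its position onto the stack; at each up step at position $i$ carrying Hermite label $\ell_i$, pop the $(\ell_i+1)$-th entry from the top of the stack (so $\ell_i=0$ pops the top) and declare $\{i,j\}$ to be a block of the matching, where $j$ is the popped position. The reason for scanning right-to-left is that when we reach an up step, the down steps to its right must already be visible, since the label is what selects among them.

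I would first check well-definedness. When the scan reaches an up step starting at height $h$, the portion of $\mu$ strictly to the right descends from height $h+1$ to $0$, so it contains exactly $h+1$ more down steps than up steps. Each such down step has pushed once and each such up step has popped once, so the stack currently holds $h+1$ entries; hence any Hermite label $\ell_i\in\{0,\dots,h\}$ gives a legal choice. The inverse map is then immediate: given a matching of shape $\mu$, the label on the up step at position $i$ paired with down step $j$ is the number of down steps $j'$ with $i<j'<j$ whose mate lies strictly to the left of $i$.

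For the weight identity, I would observe that the $\ell_i$ stack entries skipped over when popping correspond exactly to the down steps $j'\in(i,j)$ whose mate $i'$ satisfies $i'<i$, that is, to the blocks $\{i',j'\}$ with $i'<i<j'<j$. These are precisely the crossings with $\{i,j\}$ in which $\{i,j\}$ has the larger minimum, so summing $\ell_i$ over all up steps counts every crossing exactly once (attributing it to the block of larger minimum). Therefore $\sum_i \ell_i$ equals the total number of crossings.

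The principal obstacle is bookkeeping: the height/stack-size correspondence must be verified at every up step, and the ``top'' convention must be kept consistent throughout. Once these conventions are fixed, both the bijectivity and the weight-preserving property follow essentially tautologically.
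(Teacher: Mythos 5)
Your construction is correct, and all the key verifications are in place: the stack size at an up step of starting height $h$ is $h+1$ (so every label is realizable and the pop never fails), the stack read from top to bottom lists the pending down steps in increasing order of position (since you push right-to-left), so the $\ell_i$ skipped entries are exactly the down steps $j'$ with $i<j'<j$ whose mate $i'$ satisfies $i'<i$, i.e.\ the crossings charged to the arc with the larger opener; summing over openers counts each crossing once. Note, however, that the paper does not actually prove this proposition: it is stated as ``well known'' with a pointer to the reference [JSK\_DT], so there is no in-paper argument to compare against. Your stack-based scan is essentially the classical bijection one finds in that reference, written out in full, and it is a perfectly acceptable self-contained proof. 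One small point worth flagging: the paper's displayed definition of a crossing (blocks $\{i,j\}$ and $\{k,\ell\}$ with $i<j<k<\ell$) is evidently a typo --- as written it describes two disjoint arcs, for which the weight identity is false already for $\mu=uudd$. Your argument silently uses the intended convention, namely arcs $\{i',j'\}$ and $\{i,j\}$ with $i'<i<j'<j$, which is the definition the paper itself uses later for symmetric crossings; it would be worth stating explicitly that this is the notion of crossing you are working with.
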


\begin{figure} \centering
\begin{pspicture}(5,5)
  \psgrid[gridcolor=gray,griddots=4,subgriddiv=0,gridlabels=0](0,0)(5,5)
  \psdots(0.5,2.5)(1.5,1.5)(2.5,3.5)(3.5,4.5)(4.5,0.5)
  \rput(0.5,4.5){ \textcolor{gray}{$\times$}  }
  \rput(1.5,4.5){ \textcolor{gray}{$\times$}  }
  \rput(2.5,4.5){ \textcolor{gray}{$\times$}  }
  \rput(0.5,3.5){ \textcolor{gray}{$\times$}  }
  \rput(1.5,3.5){ \textcolor{gray}{$\times$}  }
\end{pspicture}
\qquad $\mapsto$ \qquad
\begin{pspicture}(5,5)
  \psgrid[gridcolor=gray,griddots=4,subgriddiv=0,gridlabels=0](0,0)(5,5)
  \psline(0,0)(0,3)(2,3)(2,4)(3,4)(3,5)(5,5)
  \rput(0.5,4.5){ \textcolor{gray}{$\times$}  }
  \rput(1.5,4.5){ \textcolor{gray}{$\times$}  }
  \rput(2.5,4.5){ \textcolor{gray}{$\times$}  }
  \rput(0.5,3.5){ \textcolor{gray}{$\times$}  }
  \rput(1.5,3.5){ \textcolor{gray}{$\times$}  }
\end{pspicture}
\caption{The bijection
from 132-avoiding permutations to Dyck paths.
The crosses represent inversions of the permutation $34215$. 
\label{bijcatalan} }
\end{figure}
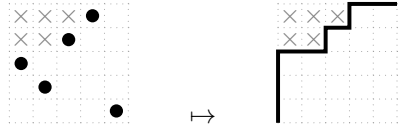

\begin{thm} \label{fixedupperpathbruhat}
Let $\sigma\in\Sym_n(132)$ and $\mu=\alpha(\sigma)$, then
\[
\sum_{D\in \D(*/\mu)} q^{\tiles(D)} = 
\sum_{\pi\ge\sigma} q^{\inv(\pi)-\inv(\sigma)},
\]
where $\pi\ge\sigma$ is the Bruhat order on $\Sym_n$.
\end{thm}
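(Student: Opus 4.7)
The plan is to reduce the theorem to the Kenyon--Wilson product formula \eqref{eq:KW_mu}. That formula, proved by Kim \cite{JSK_DT} and independently by Konvalinka, asserts
\[
\sum_{D\in\D(*/\mu)} q^{\tiles(D)} = \prod_{u\in\UP(\mu)}[\HT(u)]_q,
\]
so the theorem becomes equivalent to the purely permutation-theoretic identity
\[
\sum_{\pi\ge\sigma}q^{\inv(\pi)-\inv(\sigma)} \;=\; \prod_{u\in\UP(\mu)}[\HT(u)]_q,
\]
whenever $\sigma\in\Sym_n(132)$ and $\mu=\alpha(\sigma)$.

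To establish this identity, I would exploit the combinatorial structure of $132$-avoiding permutations: the inversion set of $\sigma$ is a top-left justified Young diagram whose staircase boundary is exactly $\mu$, and the $i$th up step $u$ of $\mu$ records a meaningful local quantity (namely $\HT(u)-1$ equals the length of the corresponding row of this diagram). The goal is to show that the Bruhat filter $\{\pi:\pi\ge\sigma\}$ decomposes combinatorially into independent choices indexed by the up steps of $\mu$, with the $u$-factor contributing $[\HT(u)]_q$. A clean way to implement this is by induction on $n$, removing the last up step of $\mu$ (equivalently, a suitable corner entry of $\sigma$) and peeling off one factor $[\HT(u)]_q$ per inductive step. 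A more conceptual alternative is to compose the standard bijection between $\D(*/\mu)$ and Hermite histories of shape $\mu$ (under which $\tiles(D)$ becomes the sum of labels) with a bijection $\Phi$ from Hermite histories to the Bruhat filter $[\sigma,w_0]$; one would interpret a label $\ell$ on an up step starting at height $h$ as an order-preserving displacement of a single dot of $\sigma$'s diagram by $\ell$ positions, creating exactly $\ell$ new inversions.

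The main obstacle is verifying that the local moves associated to distinct up steps never interfere: one must check that the inversions they create are disjoint, so that the counts telescope to $\inv(\pi)-\inv(\sigma)$, and that every $\pi\ge\sigma$ is hit exactly once. The $132$-avoidance hypothesis is essential precisely here, since it endows $\sigma$ with the Young-diagram inversion structure that enforces the required independence; in its absence, the Bruhat filter above a general $\sigma$ need not admit any product formula at all.
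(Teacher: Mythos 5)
Your reduction via \eqref{eq:KW_mu} to the identity $\sum_{\pi\ge\sigma}q^{\inv(\pi)-\inv(\sigma)}=\prod_{u\in\UP(\mu)}[\HT(u)]_q$ is legitimate, and your ``conceptual alternative'' (compose the tilings-to-Hermite-histories bijection with a bijection onto the Bruhat filter) is essentially the route the paper takes. But the proposal stops exactly where the proof has to begin: you name as ``the main obstacle'' the verification that the local moves attached to distinct up steps do not interfere, that the created inversions add up to $\inv(\pi)-\inv(\sigma)$, and that every $\pi\ge\sigma$ arises exactly once --- and then you do not carry out any of these verifications. As written, neither the inductive peeling argument nor the map $\Phi$ is defined precisely enough to check, so the statement is not proved.

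For comparison, the paper closes this gap in two concrete steps. First, it passes from Hermite histories to \emph{matchings} of shape $\mu$ (labels become crossings), and converts a matching into a permutation by placing, for each pair $(i,j)$, a dot where the row of the $i$th (up) step meets the column of the $j$th (down) step; by construction every dot lies weakly below $\mu$, crossings of the matching become inversions of the permutation lying below $\mu$, and the inversions above $\mu$ are exactly those of $\sigma$, which gives the statistic $\inv(\pi)-\inv(\sigma)$ with no ``independence'' issue to check. Second --- and this is the step genuinely absent from your proposal --- it proves that the set of permutations all of whose dots lie below $\mu$ is precisely $\{\pi:\pi\ge\sigma\}$: going up a cover in Bruhat order cannot push a dot above $\mu$, and conversely from any $\pi$ with all dots below $\mu$ one can repeatedly undo a lowest-rightmost inversion, staying below $\mu$, until one reaches the permutation whose inversion set is exactly the cells above $\mu$, namely $\sigma$. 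Without some argument of this kind identifying the image of your map with the Bruhat filter, the proof is incomplete.
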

\begin{proof}
From \cite{KMPW}, we know that Dyck tilings with a fixed upper path $\mu$ are in bijection with Hermite
histories with shape $\mu$, and the bijection sends the number of tiles to the sum of labels in the
Hermite history.
Consequently, Dyck tilings with a fixed upper path $\mu$ are in bijection with matchings of shape $\mu$,
and the bijection sends the number of tiles to the number of crossings in the matching.

To show the proposition, we give a bijection between matchings with the same shape $\mu$, and 
permutations above $\sigma$ in the Bruhat order. It is illustrated in Figure~\ref{bijbruhat}.
The idea is to put dots in the grid as follows: if there is a pair $(i,j)$ in the matching
(with $i<j$), the $i$th step in the Dyck path is vertical and the $j$th step is horizontal, 
so row to the left of the $i$th step intersects the column below the $j$th step in some cell, and we put 
a dot in this cell. Then we can read these dots as a permutation (with the matrix convention). In the example
in Figure~\ref{bijbruhat} we get 45321. The crossing in the matchings correspond two inversions of the permutations that lay
below the Dyck path.

The next step is the following: we can prove the set of permutations where all dots are below the Dyck path
$\mu$ is precisely the Bruhat interval $\{ \pi \,:\, \pi \geq \sigma\}$.
First, by construction all the dots of $\sigma$ are below $\mu$. Suppose all the dots of $\pi$ are below $\mu$
and $\pi' \gtrdot \pi$ in the Bruhat order. It means that $\pi'$ is obtained from $\pi$ by transforming
a pair of dots arranged as 
\psset{unit=2mm}
\;\begin{pspicture}(1,1) \psdot(1,0)\psdot(0,1)  \end{pspicture}\;
into a pair of dots arranged as 
\;\begin{pspicture}(1,1) \psdot(1,1)\psdot(0,0)  \end{pspicture}\;, and the new dots cannot be above $\mu$.
So the interval $\{ \pi \,:\, \pi \geq \sigma\}$ is included in the set of permutations where all dots are 
below the Dyck path $\mu$. Reciprocally, let $\pi$ be a permutation where all dots are 
below the Dyck path $\mu$. If $\pi\neq\sigma$, consider an inversion of $\pi$ wich is as low to the right 
as possible. This inversion is in a pattern 
\;\begin{pspicture}(1,1) \rput(0,1){$\times$}\psdot(1,1)\psdot(0,0)  \end{pspicture}\;
and the cross is below $\mu$. By transforming this pattern into 
\;\begin{pspicture}(1,1) \psdot(1,0)\psdot(0,1)  \end{pspicture}\;, we obtain $\pi'$ whith $\pi'\lessdot\pi$
and has still the property that all dots are below $\mu$. By repeating this operation we must arrive at 
a permutation whose inversions are exactly the cells above $\mu$, i.e. $\sigma$. So $\pi$ is in the 
interval $\{ \pi \,:\, \pi \geq \sigma\}$.
\end{proof}

\begin{figure} \centering
 \begin{pspicture}(10,2)
   \psdots(1,0)(2,0)(3,0)(4,0)(5,0)(6,0)(7,0)(8,0)(9,0)(10,0)
   \psarc(3,0){2}{0}{180}\psarc(2.5,0){0.5}{0}{180}
   \psarc(5,0){1}{0}{180}\psarc(8,0){1}{0}{180}\psarc(9,0){1}{0}{180}
 \end{pspicture}
\qquad $\mapsto$ \qquad
\begin{pspicture}(5,5)
  \psgrid[gridcolor=gray,griddots=4,subgriddiv=0,gridlabels=0](0,0)(5,5)
  \psline(0,0)(0,2)(1,2)(1,3)(3,3)(3,5)(5,5)
  \psdots(0.5,1.5)
  \psdots(1.5,0.5)
  \psdots(2.5,2.5)
  \psdots(3.5,3.5)
  \psdots(4.5,4.5)
  \rput(1.5,2.5){ \textcolor{gray}{$\times$}  }
  \rput(3.5,4.5){ \textcolor{gray}{$\times$}  }
 \end{pspicture}
\caption{The bijection proving Proposition~\ref{fixedupperpathbruhat}. \label{bijbruhat} } 
\end{figure}
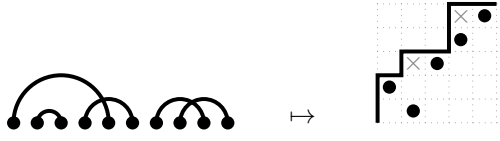

\begin{figure} \centering
\begin{pspicture}(14,3)
  \psgrid[gridcolor=gray,griddots=4,subgriddiv=0,gridlabels=0](0,0)(14,3)
  \psline(0,0)(2,2)(3,1)(4,2)(5,1)(7,3)(10,0)(12,2)(14,0)
\end{pspicture}
\qquad $\mapsto$ \qquad
\begin{pspicture}(-1,0)(3,2)
\psdots(0,0)(-1,1)(0,1)(1,1)(1,2)(3,0)(3,1)
\psline(0,0)(-1,1)
\psline(0,0)(0,1)
\psline(0,0)(1,1)
\psline(1,1)(1,2)
\psline (3,0)(3,1) 
\end{pspicture}
\caption{The bijection from Dyck paths to plane forests. Here the Dyck
  path is rotated clockwise by an angle of $45^\circ$.
\label{bijcatalan2} }
\end{figure}
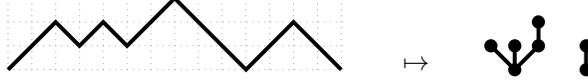

Let us turn to the case of a fixed lower path in Dyck tilings.  We can
identify a Dyck path $\lambda$ with a plane forest: it is obtained from $\lambda$ by ``squeezing'' 
the path as shown in Figure~\ref{bijcatalan2},
a pair of facing up step and down step corresponds to a vertex.

Let $F$ be a plane forest with $n$ vertices. An \emph{increasing
  labeling} of $F$ is a way of labeling the vertices of $F$ with
$1,2,\dots,n$ so that the label of a vertex is greater than the label
of its parent. Let $L$ be an increasing labeling of $F$. An
\emph{inversion} of $L$ is a pair $i<j$ such that $j$ is not a
descendant of $i$ and $j$ appears to the left of $i$. For example, if
$L$ is the increasing labeling in Figure~\ref{fig:post}, then $L$ has
many inversions including $(13,7),(8,7),(2,4)$.  We denote by
$\post(L)$ (respectively, $\pre(L)$) the permutation obtained by
reading $L$ from left to right using post-order (respectively,
pre-order). See Figure~\ref{fig:post}.  It is easy to see that there
is a unique increasing labeling $L_0$ of $F$ such that
$\inv(\post(L_0))$ is minimal. More specifically, $L_0$ is the
increasing labeling of $F$ with $\inv(L_0)=0$, or equivalently, $L_0$
is the increasing labeling of $F$ such that $\pre(L_0)=id_n$, the
identity permutation of $[n]$. It is not difficult to see that the
permutation $\pi_0=\post(L_0)$ is 312-avoiding.

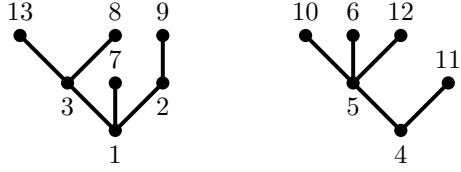
\begin{figure} \centering
\psset{unit=2}
\begin{pspicture}(1,-.5)(10,2.5)
\psdots(1,2)(2,1)(3,2)(3,0)(3,1)(4,1)(4,2)(7,2)(8,1)(8,2)(9,0)(9,2)(10,1)
\psline(1,2)(2,1)
\psline (3,2)(2,1)
\psline (3,0)(2,1)
\psline (3,0)(3,1)
\psline (3,0)(4,1)
\psline (4,1)(4,2)
\psline(7,2)(8,1)
\psline(8,1)(8,2)
\psline(8,1)(9,2)
\psline(8,1)(9,0)
\psline(9,0)(10,1)
\rput(1,2.5){13}
\rput(2,0.5){3}
\rput(3,2.5){8}
\rput(3,-0.5){1}
\rput(3,1.5){7}
\rput(4,.5){2}
\rput(4,2.5){9}
\rput(7,2.5){10}
\rput(8,.5){5}
\rput(8,2.5){6}
\rput(9,-0.5){4}
\rput(9,2.5){12}
\rput(10,1.5){11}
\end{pspicture}
\caption{An increasing labeling $L$ of a forest of size 13.
We have $\post(L)=13,8,3,7,9,2,1,10,6,12,5,11,4$ and
$\pre(L)=1,3,13,8,7,2,9,4,5,10,6,12,11$.}
\label{fig:post}
\end{figure}

\begin{thm}
  Let $\lambda$ be a Dyck path with corresponding plane forest
  $F$. Let $L_0$ be the increasing labeling of $F$ such
  that $\pre(L_0)=id_n$, and $\pi_0=\post(L_0)$.  Then
\[
\sum_{D\in \D(\lambda/*)} q^{\art(D)} = 
\sum_{\pi\ge_L\pi_0} q^{\inv(\pi)-\inv(\pi_0)},
\]
where $\ge_L$ is the left weak order on $\Sym_n$.
\end{thm}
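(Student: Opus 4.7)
The plan is to combine two ingredients already available in the literature: the Kim--M\'esz\'aros--Panova--Wilson bijection \cite{KMPW} between Dyck tilings with fixed lower path and increasing labelings of the associated plane forest, and the Bj\"orner--Wachs correspondence \cite{Bjorner1989,Bjorner1991} that realizes the right-hand side of \eqref{eq:KW_lambda} as the length generating function of an interval in the left weak order on $\Sym_n$.

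Let $\LL(F)$ denote the set of increasing labelings of $F$. First I would recall the KMPW bijection $\Phi : \D(\lambda/*) \to \LL(F)$ and verify that it transports $\art$ to the inversion count of the postorder reading, shifted by $\inv(\pi_0)$; that is,
\[
\art(D) \;=\; \inv(\post(\Phi(D))) - \inv(\pi_0).
\]
This identity is implicit in the proof of \eqref{eq:KW_lambda} in \cite{KMPW}, but writing it out requires some care because $\art(D) = (\area(D)+\tiles(D))/2$ is a composite statistic. Granting it, we immediately obtain
\[
\sum_{D \in \D(\lambda/*)} q^{\art(D)} \;=\; \sum_{L \in \LL(F)} q^{\inv(\post(L)) - \inv(\pi_0)}.
\]

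Next I would invoke the Bj\"orner--Wachs theorems to conclude that the map $L \mapsto \post(L)$ is a bijection from $\LL(F)$ onto the left weak order interval $\{\pi \in \Sym_n : \pi \ge_L \pi_0\}$, and that it sends $\inv(\post(L))$ to the length $\inv(\pi)$ of the image permutation. The minimum element of this interval is exactly $\pi_0 = \post(L_0)$, because $L_0$ is the unique increasing labeling of $F$ satisfying $\pre(L_0) = id_n$, and the resulting permutation is known to be $312$-avoiding. Composing the two bijections yields the desired identity
\[
\sum_{D \in \D(\lambda/*)} q^{\art(D)} \;=\; \sum_{\pi \ge_L \pi_0} q^{\inv(\pi) - \inv(\pi_0)}.
\]

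The main obstacle is the first step: showing that the KMPW bijection intertwines $\art$ with $\inv \circ \post$ up to the fixed shift $\inv(\pi_0)$. Once this is established, the weak-order description is a routine application of the Bj\"orner--Wachs framework for labeled forests. A conceptually cleaner alternative would be a direct bijection from $\D(\lambda/*)$ to the weak-order interval, but routing through $\LL(F)$ is the most economical approach given the tools already available.
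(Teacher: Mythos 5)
Your proposal is correct and follows essentially the same route as the paper: both factor the identity through the KMPW bijection onto increasing labelings $\LL(F)$ and then identify $\{\post(L) : L\in\LL(F)\}$ with the left weak order interval above $\pi_0$. The only difference is that where you cite Bj\"orner--Wachs for this second step, the paper gives a short self-contained induction on the number of inversions (swapping $i$ and $i+1$ in a labeling having the inversion $(i+1,i)$), and it quotes KMPW in the form $\art(D)=\inv(L)$, so that your intermediate identity $\inv(\post(L))=\inv(L)+\inv(\pi_0)$ is the only extra observation needed.
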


\begin{proof}
  It is shown in \cite{KMPW} that
\[
\sum_{D\in \D(\lambda/*)} q^{\art(D)} = 
\sum_{L\in\LL(F)} q^{\inv(\pi)},
\]
where $\LL(F)$ is the set of increasing labelings of $F$. Thus, it is
enough to show that for all $k\ge0$, 
there is a bijection between the two sets 
\[
A_k=\{L\in\LL(F): \inv(L)=k\}, \quad
B_k=\{\pi\in\Sym_n: \pi\ge_L \pi_0, \inv(\pi)-\inv(\pi_0)=k\}.
\]

We will show that the map $L\mapsto \post(L)$ is a bijection from
$A_k$ to $B_k$ for all $k\ge0$ by induction on $k$. Since
$A_0=\{L_0\}$ and $B_0=\{\pi_0\}$, it is true when $k=0$. Suppose that
the claimed statement is true for $k\ge0$. We need to show that the
map $L\mapsto \post(L)$ is a bijection from $A_{k+1}$ to
$B_{k+1}$. Let $L\in A_{k+1}$. Since $\inv(L)=k+1\ge1$, we can find an
integer $i$ such that $(i+1,i)$ is an inversion of $L$. Since $i+1$ is
not a descendant of $i$ in $L$, the labeling $L'$ obtained from $L$ by
exchanging $i$ and $i+1$ is also an increasing labeling of $F$. Since
$\inv(L')=\inv(L)-1=k$ we have $L'\in A_k$. By the induction
hypothesis, $\pi' = \post(L') \in B_k$. One can easily see that the
permutation $\pi=\post(L)$ is obtained from $\pi'$ by exchanging $i$
and $i+1$. Since $i+1$ appears to the left of $i$ in $\pi$ we have
$\inv(\pi)=\inv(\pi')+1$ and $\pi\in B_{k+1}$. Thus $L\mapsto
\post(L)$ is a map from $A_{k+1}$ to $B_{k+1}$. Similarly, we can show
that, for given $\pi\in B_{k+1}$, there is $L\in B_{k+1}$ such that
$\post(L)=\pi$. Since $L$ is determined by $\post(L)$ for all $L\in
\LL(F)$, the map $L\mapsto \post(L)$ is a bijection from $A_{k+1}$ to
$B_{k+1}$. 
\end{proof}

Note that the inversion generating function of increasing labelings of a plane forest is given by a hook length 
formula \cite{Bjorner1989}.
The fact that some intervals for the weak order have a generating function given by a hook length formula
follows from \cite{Bjorner1991}.

\section{\texorpdfstring{$k$-Dyck tilings}{k-Dycktilings}}

For an integer $k\geq 1$,
a \emph{$k$-Dyck path} is a lattice path consisting of \emph{up steps}
$(0,1)$ and \emph{down steps} $(1,0)$ from the origin $(0,0)$ to the
point $(kn,n)$ which never goes below the line $y=x/k$.  Let
$\kDyck(n)$ denote the set of $k$-Dyck paths from $(0,0)$ to
$(kn,n)$. It is well known that the cardinal of $\kDyck(n)$ is 
the Fuss-Catalan number $\frac{1}{kn+1}\binom{(k+1)n}{n}$
(see for example \cite{dvoretzky}). As in the case of Dyck path,
we denote $\UP(\mu)$ the set of up steps of a $k$-Dyck path $\mu$.

A \emph{$k$-Dyck tile} is a ribbon in which the centers of the cells
form a $k$-Dyck path. 
Let $\lambda,\mu\in\kDyck(n)$ such that $\mu$ is weakly above $\lambda$.
A \emph{(cover-inclusive) $k$-Dyck tiling} is a
tiling $D$ of the region $\lm$ between $\lambda,\mu\in \kDyck(n)$ with
$k$-Dyck tiles satisfying the cover-inclusive property: if $\eta$ is
a tile in $D$, then the translation of $\eta$ by $(1,-1)$ is either
completely below $\lambda$ or contained in another tile of $D$. See
Figure~\ref{fig:2-DT} for an example.  We denote by $\kD(\lm)$ the set
of $k$-Dyck tilings of $\lm$. We also denote by $\kD(\lambda/*)$ and
$\kD(*/\mu)$ the sets of $k$-Dyck tilings with fixed lower path
$\lambda$ and with fixed upper path $\mu$, respectively.

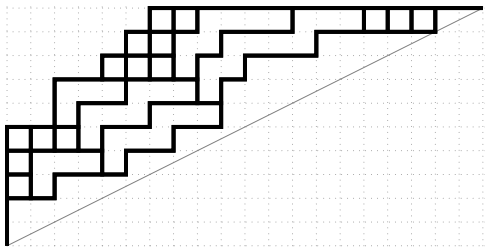
\begin{figure}
  \centering
\begin{pspicture}(0,0)(20,10) \kdyckgrid{20}{10}
 \psline(0,0)(0, 1)(0, 2)(0, 3)(0, 4)(0,
    5)(1, 5)(2, 5)(2, 6)(2, 7)(3, 7)(4, 7)(4, 8)(5, 8)(5, 9)(6, 9)(6,
    10)(7, 10)(8, 10)(9, 10)(10, 10)(11, 10)(12, 10)(13, 10)(14,
    10)(15, 10)(16, 10)(17, 10)(18, 10)(19, 10)(20, 10)\psline(0,0)(0,
    1)(0, 2)(1, 2)(2, 2)(2, 3)(3, 3)(4, 3)(5, 3)(5, 4)(6, 4)(7, 4)(7,
    5)(8, 5)(9, 5)(9, 6)(9, 7)(10, 7)(10, 8)(11, 8)(12, 8)(13, 8)(13,
    9)(14, 9)(15, 9)(16, 9)(17, 9)(18, 9)(18, 10)(19, 10)(20,
    10)\pspolygon(18, 9)(17, 9)(17, 10)(18, 10)\pspolygon(17, 9)(16,
    9)(16, 10)(17, 10)\pspolygon(5, 3)(4, 3)(4, 4)(4, 5)(5, 5)(6,
    5)(6, 6)(7, 6)(8, 6)(9, 6)(9, 5)(8, 5)(7, 5)(7, 4)(6, 4)(5, 4)(5,
    3)\pspolygon(16, 9)(15, 9)(15, 10)(16, 10)\pspolygon(2, 2)(1,
    2)(1, 3)(1, 4)(2, 4)(3, 4)(4, 4)(4, 3)(3, 3)(2, 3)(2,
    2)\pspolygon(9, 6)(8, 6)(8, 7)(8, 8)(9, 8)(9, 9)(10, 9)(11, 9)(12,
    9)(12, 10)(13, 10)(14, 10)(15, 10)(15, 9)(14, 9)(13, 9)(13, 8)(12,
    8)(11, 8)(10, 8)(10, 7)(9, 7)(9, 6)\pspolygon(1, 2)(0, 2)(0, 3)(1,
    3)\pspolygon(4, 4)(3, 4)(3, 5)(3, 6)(4, 6)(5, 6)(5, 7)(6, 7)(7,
    7)(8, 7)(8, 6)(7, 6)(6, 6)(6, 5)(5, 5)(4, 5)(4, 4)\pspolygon(3,
    4)(2, 4)(2, 5)(3, 5)\pspolygon(8, 7)(7, 7)(7, 8)(7, 9)(8, 9)(8,
    10)(9, 10)(10, 10)(11, 10)(12, 10)(12, 9)(11, 9)(10, 9)(9, 9)(9,
    8)(8, 8)(8, 7)\pspolygon(7, 7)(6, 7)(6, 8)(7, 8)\pspolygon(1,
    3)(0, 3)(0, 4)(1, 4)\pspolygon(2, 4)(1, 4)(1, 5)(2,
    5)\pspolygon(3, 5)(2, 5)(2, 6)(2, 7)(3, 7)(4, 7)(5, 7)(5, 6)(4,
    6)(3, 6)(3, 5)\pspolygon(6, 7)(5, 7)(5, 8)(6, 8)\pspolygon(7,
    8)(6, 8)(6, 9)(7, 9)\pspolygon(8, 9)(7, 9)(7, 10)(8,
    10)\pspolygon(5, 7)(4, 7)(4, 8)(5, 8)\pspolygon(6, 8)(5, 8)(5,
    9)(6, 9)\pspolygon(7, 9)(6, 9)(6, 10)(7, 10)\pspolygon(1, 4)(0,
    4)(0, 5)(1, 5)\end{pspicture}
  \caption{An example of $2$-Dyck tiling.}
  \label{fig:2-DT}
\end{figure}

For $D\in \kD(\lm)$, there are two natural statistics: the number
$\tiles(D)$ of tiles in $D$ and the area $\area(D)$ of the region
occupied by $D$.  We also define
\[
\art_k(D) = \frac{k\cdot \area(D) + \tiles(D)}{k+1}.
\]

\begin{defn}
  For an up step $u$ of a $k$-Dyck path, we define the \emph{height}
  $\HT(u)$ of $u$ to be the number of squares between $u$ and the line
  $y=x/k$ plus 1.  A \emph{$k$-Hermite history} is a $k$-Dyck path in
  which every up step $u$ is labeled with an integer in
  $\{0,1,2,\dots,\HT(u)-1\}$. See for example Figure~\ref{fig:ht}.
  Given a $k$-Dyck path $\mu$, we denote by $\kH(\mu)$ the set of
  $k$-Hermite histories on $\mu$. The \emph{weight} $\wt(H)$ of a
  $k$-Hermite history is the sum of the labels in $H$.
\end{defn}

\begin{figure}
  \centering
  \begin{pspicture}(8,4) \kdyckgrid{8}{4}
\psline(0,0)(0, 1)(0, 2)(0, 3)(1, 3)(2, 3)(2, 4)(3, 4)(4, 4)(5, 4)(6,
4)(7, 4)(8, 4)
\psset{linewidth=.1,fillcolor=lightgray,fillstyle=solid}
\psframe(0,2)(1,3) \psframe(1,2)(2,3) \psframe(2,2)(3,3) \psframe(3,2)(4,3)
\rput[r](-.3,2.5){$u$}
  \end{pspicture}\qquad \qquad
  \begin{pspicture}(8,4) \kdyckgrid{8}{4}
\psline(0,0)(0, 1)(0, 2)(0, 3)(1, 3)(2, 3)(2, 4)(3, 4)(4, 4)(5, 4)(6,
4)(7, 4)(8, 4)
\rput[r](-.3,2.5){$0$}
\rput[r](-.3,1.5){$1$}
\rput[r](-.3,0.5){$0$}
\rput[r](1.7,3.5){$4$}
  \end{pspicture}
  \caption{The up step $u$ on the left has height $\HT(u)=5$ because
    there are $4$ squares between $u$ and the line $y=x/2$. The
    diagram on the right is an example of $2$-Hermite history.}
  \label{fig:ht}
\end{figure}
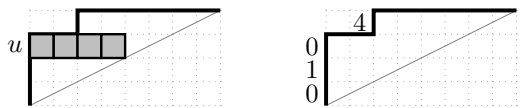

The following theorem is a generalization of \eqref{eq:KW_mu}.  Our
proof generalizes the bijective proof in \cite{JSK_DT}.

\begin{thm} \label{thmmuqint}
For $\mu\in\kDyck(n)$, we have
\begin{equation}
  \label{eq:0}
\sum_{D\in\kD(*/\mu)} q^{\tiles(D)} 
= \prod_{u\in \UP(\mu)} \qint{\HT(u)}.
\end{equation}
\end{thm}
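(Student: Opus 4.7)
The right-hand side of \eqref{eq:0} is by construction the weight generating function $\sum_{H\in\kH(\mu)} q^{\wt(H)}$: each up step $u$ of $\mu$ is labeled independently with an integer in $\{0,1,\dots,\HT(u)-1\}$, and this choice contributes a factor $[\HT(u)]_q$ to the product. So the theorem reduces to building a weight-preserving bijection $\Phi:\kD(*/\mu)\to\kH(\mu)$ with $\tiles(D)=\wt(\Phi(D))$.

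The plan is to generalize the bijection of \cite{JSK_DT} from $k=1$ to arbitrary $k$. Given $D\in\kD(*/\mu)$, I would assign to each up step $u$ of $\mu$ a label $\ell(u)$ obtained by iterating the cover-inclusive translation: starting from the single cell just to the right of $u$, repeatedly translate by $(1,-1)$, always landing in another tile of $D$ by the cover-inclusive property, and record $\ell(u)$ as the length of the resulting chain of tiles that stay weakly above the line $y=x/k$. Summing $\ell(u)$ over all up steps of $\mu$ counts every cell of every tile exactly once (a cell of a tile is assigned to the unique up step one gets by iterated $(-1,1)$-translation up to the upper path $\mu$), so $\sum_{u}\ell(u)=\tiles(D)$. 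Since the line $y=x/k$ lies $\HT(u)-1$ cells below $u$ along the diagonal direction, the chain can have at most $\HT(u)-1$ steps, so $\ell(u)\in\{0,1,\dots,\HT(u)-1\}$ as required.

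For the inverse, given $H\in\kH(\mu)$, I would read the labels in a suitable order (for instance, processing up steps from right to left along $\mu$) and grow tiles one at a time: for each up step $u$ with label $\ell$, insert a $k$-Dyck tile of length $\ell$ whose center chain starts at the appropriate diagonal emanating from $u$, attaching to any tile already present at the landing cell. The main obstacle is verifying that this insertion procedure is always well defined — that the new tile is a legitimate $k$-Dyck ribbon, that it does not overlap previously placed tiles, and that the resulting tiling satisfies the cover-inclusive property — and checking that the two procedures are mutually inverse. Both verifications follow the same inductive bookkeeping as in the $k=1$ case of \cite{JSK_DT}, but with the $k$-Dyck boundary $y=x/k$ playing the role of $y=x$; the nontrivial geometric point is that the shape of a $k$-Dyck tile is compatible with the cover-inclusive translation $(1,-1)$, so that chains of translates behave exactly as in the classical case and the induction goes through.
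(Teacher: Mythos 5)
Your reduction of the identity to a weight-preserving bijection $\kD(*/\mu)\to\kH(\mu)$ is the right strategy and matches the paper's, but the forward map you propose is not correct, and the problem is structural rather than a matter of unfinished bookkeeping. The chain obtained by iterating the cover-inclusive translation $(1,-1)$ stays on a single anti-diagonal $x+y=\mathrm{const}$, and this has two fatal consequences. First, a $k$-Dyck tile with $m$ cells meets $m$ distinct anti-diagonals (one cell on each), so it is entered by up to $m$ different chains, and $\sum_u\ell(u)$ does not equal $\tiles(D)$. Concretely, take $k=1$, $\mu=UUUDDD$, $\lambda=UDUDUD$: the region is a single $3$-cell ribbon, which may be tiled by one $3$-cell Dyck tile; your chains issued from the second and third up steps each meet that tile once, giving total weight $2$ for a tiling with $1$ tile. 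Second, the quantity $\HT(u)-1$ counts cells horizontally between $u$ and the line $y=x/k$, not along the $(1,-1)$ direction, so your labels cannot fill the required range: in the same example the third up step has $\HT=3$, but a $(1,-1)$-chain starting in the cell to its right exits the region after one step and can never produce the label $2$ that the Hermite-history side requires (and which the tiling by three unit tiles must be sent to). Relatedly, your claim that every cell reaches an up step of $\mu$ by iterating $(-1,1)$ fails for cells lying just below a down step of $\mu$.

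The correct journey, which is what the paper uses to generalize \cite{JSK_DT}, travels through tiles via their \emph{entry} (the left side of the lowest cell) and \emph{exit} (the right side of the rightmost cell): starting at an up step $u$ of $\mu$, enter the tile whose entry is $u$ (if any), leave it at its exit, continue into the tile whose entry is that exit edge, and so on; the label of $u$ is the number of tiles traversed. Since each tile has a unique entry edge, each tile is traversed by exactly one journey, and this is what yields $\tiles(D)=\wt(H)$. The inverse is also not a one-shot ``insert a ribbon of length $\ell$ at each up step'' construction: the paper builds it by a recursion with two cases, either peeling off the single cell under an up step with positive label followed by a down step, or, when no such step exists, deleting a peak $ud^k$ of label $0$ and contracting the tiling along a line of slope $-1$. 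You would need to replace your forward map by the entry/exit journey before the remaining verifications could go through.
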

\begin{proof}

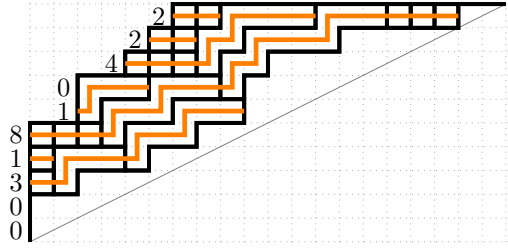
\begin{figure}
  \centering
\begin{pspicture}(0,0)(20,10) \kdyckgrid{20}{10}
 \psline(0,0)(0, 1)(0, 2)(0, 3)(0, 4)(0,
    5)(1, 5)(2, 5)(2, 6)(2, 7)(3, 7)(4, 7)(4, 8)(5, 8)(5, 9)(6, 9)(6,
    10)(7, 10)(8, 10)(9, 10)(10, 10)(11, 10)(12, 10)(13, 10)(14,
    10)(15, 10)(16, 10)(17, 10)(18, 10)(19, 10)(20, 10)\psline(0,0)(0,
    1)(0, 2)(1, 2)(2, 2)(2, 3)(3, 3)(4, 3)(5, 3)(5, 4)(6, 4)(7, 4)(7,
    5)(8, 5)(9, 5)(9, 6)(9, 7)(10, 7)(10, 8)(11, 8)(12, 8)(13, 8)(13,
    9)(14, 9)(15, 9)(16, 9)(17, 9)(18, 9)(18, 10)(19, 10)(20,
    10)\pspolygon(18, 9)(17, 9)(17, 10)(18, 10)\pspolygon(17, 9)(16,
    9)(16, 10)(17, 10)\pspolygon(5, 3)(4, 3)(4, 4)(4, 5)(5, 5)(6,
    5)(6, 6)(7, 6)(8, 6)(9, 6)(9, 5)(8, 5)(7, 5)(7, 4)(6, 4)(5, 4)(5,
    3)\pspolygon(16, 9)(15, 9)(15, 10)(16, 10)\pspolygon(2, 2)(1,
    2)(1, 3)(1, 4)(2, 4)(3, 4)(4, 4)(4, 3)(3, 3)(2, 3)(2,
    2)\pspolygon(9, 6)(8, 6)(8, 7)(8, 8)(9, 8)(9, 9)(10, 9)(11, 9)(12,
    9)(12, 10)(13, 10)(14, 10)(15, 10)(15, 9)(14, 9)(13, 9)(13, 8)(12,
    8)(11, 8)(10, 8)(10, 7)(9, 7)(9, 6)\pspolygon(1, 2)(0, 2)(0, 3)(1,
    3)\pspolygon(4, 4)(3, 4)(3, 5)(3, 6)(4, 6)(5, 6)(5, 7)(6, 7)(7,
    7)(8, 7)(8, 6)(7, 6)(6, 6)(6, 5)(5, 5)(4, 5)(4, 4)\pspolygon(3,
    4)(2, 4)(2, 5)(3, 5)\pspolygon(8, 7)(7, 7)(7, 8)(7, 9)(8, 9)(8,
    10)(9, 10)(10, 10)(11, 10)(12, 10)(12, 9)(11, 9)(10, 9)(9, 9)(9,
    8)(8, 8)(8, 7)\pspolygon(7, 7)(6, 7)(6, 8)(7, 8)\pspolygon(1,
    3)(0, 3)(0, 4)(1, 4)\pspolygon(2, 4)(1, 4)(1, 5)(2,
    5)\pspolygon(3, 5)(2, 5)(2, 6)(2, 7)(3, 7)(4, 7)(5, 7)(5, 6)(4,
    6)(3, 6)(3, 5)\pspolygon(6, 7)(5, 7)(5, 8)(6, 8)\pspolygon(7,
    8)(6, 8)(6, 9)(7, 9)\pspolygon(8, 9)(7, 9)(7, 10)(8,
    10)\pspolygon(5, 7)(4, 7)(4, 8)(5, 8)\pspolygon(6, 8)(5, 8)(5,
    9)(6, 9)\pspolygon(7, 9)(6, 9)(6, 10)(7, 10)\pspolygon(1, 4)(0,
    4)(0, 5)(1, 5) 
\psline[linecolor=orange, linewidth=.2](0,3.5)(1,3.5)
\psline[linecolor=orange, linewidth=.2](0,2.5)(1.5,2.5)(1.5,3.5)(4.5,3.5)(4.5,4.5)(6.5,4.5)(6.5,5.5)(9,5.5)
\psline[linecolor=orange,linewidth=.2](0,4.5)(3.5,4.5)(3.5,5.5)(5.5,5.5)(5.5,6.5)(8.5,6.5)
(8.5,7.5)(9.5,7.5)(9.5,8.5)(12.5,8.5)(12.5,9.5)(18,9.5)
\psline[linecolor=orange,linewidth=.2](2,5.5)(2.5,5.5)(2.5,6.5)(5,6.5)
\psline[linecolor=orange,linewidth=.2](4,7.5)(7.5,7.5)(7.5,8.5)(8.5,8.5)(8.5,9.5)(12,9.5)
\psline[linecolor=orange,linewidth=.2](5,8.5)(7,8.5)
\psline[linecolor=orange,linewidth=.2](6,9.5)(8,9.5)
\rput[r](-.3,.5){0}
\rput[r](-.3,1.5){0}
\rput[r](-.3,2.5){3}
\rput[r](-.3,3.5){1}
\rput[r](-.3,4.5){8}
\rput[r](1.7,5.5){1}
\rput[r](1.7,6.5){0}
\rput[r](3.7,7.5){4}
\rput[r](4.7,8.5){2}
\rput[r](5.7,9.5){2}
\end{pspicture}
\caption{An illustration of the bijection between $k$-Dyck tilings and
  $k$-Hermite histories.}
  \label{fig:2-DT-bij}
\end{figure}

  It suffices to find a bijection $f:\kD(*/\mu)\to\kH(\mu)$ such
  that if $f(D)=H$ then $\tiles(D)=\wt(H)$. We construct such a
  bijection as follows. Let $D\in \kD(*/\mu)$. In order to define the
  corresponding $f(D)=H\in\kH(\mu)$ we only need to define the
  labels of each up step in $\mu$. For a $k$-Dyck tile $\eta$, the
  \emph{entry} of $\eta$ is the left side of the lowest cell in $\eta$
  and the \emph{exit} of $\eta$ is the right side of the rightmost
  cell in $\eta$. For each up step $u$ in $\mu$, we travel the tiles
  in $D$ in the following way. If $u$ is the entry of a $k$-Dyck tile
  $\eta$ in $D$, then enter $\eta$ at the entry and leave $\eta$ from
  the exit. If the exit of $\eta$ is the entry of another $k$-Dyck
  tile of $D$ then travel that tile as well. Continue traveling in
  this way until we do not reach the entry of any $k$-Dyck tile in
  $D$. Then the label of $u$ is defined to be the number of tiles that we have
  traveled. See Figure~\ref{fig:2-DT-bij} for an example. It is not
  difficult to see $f(D)\in \kH(\mu)$. Clearly, we have
  $\tiles(D)=\wt(H)$.

  It remains to show that $f$ is a bijection. Let $H\in \kH(\mu)$.  We
  will find the inverse image $D=f^{-1}(H)$ recursively.  Suppose that
  $n$ is the length of $\mu$ and $m$ is the number of cells between
  $\mu$ and the line $y=x/k$.  If $n=0$ or $m=0$, then both
  $\kD(*/\mu)$ and $\kH(\mu)$ have a unique element, and $f^{-1}(H)$
  is the unique element in $\kD(*/\mu)$ without tiles. Now let
  $n,m\ge1$ and suppose that we can find $f^{-1}(H')$ for every
  $H'\in\kH(\mu')$ if the length of $\mu'$ is smaller than $n$ or the
  number of cells between $\mu'$ and the line $y=x/k$ is smaller than
  $m$. There are two cases.

  Case 1: $H$ has an up step with label $\ell\ge1$ followed by a down
  step. In this case let $H'$ be the $k$-Hermite history obtained from
  $H$ by exchanging the up step and the down step following it and
  decrease the label $\ell$ by $1$. Then the shape $\mu'$ of $H'$ has
  one less cells between $\mu'$ and the line $y=x/k$. By assumption we
  can find $f^{-1}(\mu')$. Then $f^{-1}(\mu)$ is the $k$-Dyck tiling
  obtained from $f^{-1}(\mu')$ by adding the single square
  $\mu\setminus\mu'$.

  Case 2: $H$ has no up step with label $\ell\ge1$ followed by a down
  step. Since $\mu$ is a $k$-Dyck path of positive length, we can find
  an up step $u$ followed by $k$ down steps. Since $u$ is followed by
  a down step, its label is $0$. Let $P$ be the point where $u$
  starts. Let $\mu'$ be the $k$-Dyck path obtained from $\mu$ by
  deleting $u$ and the $k$ down steps following $u$. By assumption, we
  can can find $f^{-1}(\mu')$. Then $f^{-1}(\mu)$ is the $k$-Dyck
  tiling obtained from $f^{-1}(\mu')$ by cutting it with the line of
  slope $-1$ passing through $P$ and inserting an up step followed by
  $k$ down steps. For each $k$-Dyck tile divided by the line, we
  attach the two divided pieces by connecting the separated points on
  the border with an up step and $k$ down steps following it.
See Figure~\ref{fig:stretch}.

\begin{figure}
  \centering
\begin{pspicture}(0,0)(12,6)
\kdyckgrid{12}{6}
\psline(0,0)(0, 1)(0, 2)(1, 2)(1, 3)(1, 4)(2, 4)(2, 5)(2, 6)(3, 6)(4, 6)(5, 6)(6, 6)(7, 6)(8, 6)(9, 6)(10, 6)(11, 6)(12, 6)\psline(0,0)(0, 1)(0, 2)(1, 2)(2, 2)(2, 3)(3, 3)(4, 3)(5, 3)(5, 4)(6, 4)(7, 4)(7, 5)(7, 6)(8, 6)(9, 6)(10, 6)(11, 6)(12, 6)\pspolygon(5, 3)(4, 3)(4, 4)(4, 5)(5, 5)(6, 5)(7, 5)(7, 4)(6, 4)(5, 4)(5, 3)\pspolygon(2, 2)(1, 2)(1, 3)(1, 4)(2, 4)(3, 4)(4, 4)(4, 3)(3, 3)(2, 3)(2, 2)\pspolygon(7, 5)(6, 5)(6, 6)(7, 6)\pspolygon(4, 4)(3, 4)(3, 5)(3, 6)(4, 6)(5, 6)(6, 6)(6, 5)(5, 5)(4, 5)(4, 4)\pspolygon(3, 4)(2, 4)(2, 5)(3, 5)\pspolygon(3, 5)(2, 5)(2, 6)(3, 6)
\rput[r](-.3,.5){0}
\rput[r](-.3,1.5){0}
\rput[r](.7,2.5){2}
\rput[r](.7,3.5){0}
\rput[r](1.7,4.5){3}
\rput[r](1.7,5.5){1}
\psline[linecolor=red](2,6)(5,3)
\end{pspicture} \qquad
$\rightarrow$ \qquad
\begin{pspicture}(0,0)(14,7)
\kdyckgrid{14}{7}
\psline(0,0)(0, 1)(0, 2)(1, 2)(1, 3)(1, 4)(2, 4)(2, 5)(2, 6)(2, 7)(3, 7)(4, 7)(5, 7)(6, 7)(7, 7)(8, 7)(9, 7)(10, 7)(11, 7)(12, 7)(13, 7)(14, 7)\psline(0,0)(0, 1)(0, 2)(1, 2)(2, 2)(2, 3)(3, 3)(4, 3)(5, 3)(5, 4)(6, 4)(7, 4)(7, 5)(8, 5)(9, 5)(9, 6)(9, 7)(10, 7)(11, 7)(12, 7)(13, 7)(14, 7)\pspolygon(5, 3)(4, 3)(4, 4)(4, 5)(5, 5)(6, 5)(6, 6)(7, 6)(8, 6)(9, 6)(9, 5)(8, 5)(7, 5)(7, 4)(6, 4)(5, 4)(5, 3)\pspolygon(2, 2)(1, 2)(1, 3)(1, 4)(2, 4)(3, 4)(4, 4)(4, 3)(3, 3)(2, 3)(2, 2)\pspolygon(9, 6)(8, 6)(8, 7)(9, 7)\pspolygon(4, 4)(3, 4)(3, 5)(3, 6)(4, 6)(5, 6)(5, 7)(6, 7)(7, 7)(8, 7)(8, 6)(7, 6)(6, 6)(6, 5)(5, 5)(4, 5)(4, 4)\pspolygon(3, 4)(2, 4)(2, 5)(3, 5)\pspolygon(3, 5)(2, 5)(2, 6)(2, 7)(3, 7)(4, 7)(5, 7)(5, 6)(4, 6)(3, 6)(3, 5)
\rput[r](-.3,.5){0}
\rput[r](-.3,1.5){0}
\rput[r](.7,2.5){2}
\rput[r](.7,3.5){0}
\rput[r](1.7,4.5){3}
\rput[r](1.7,5.5){1}
\rput[r](1.7,6.5){0}
\psline[linecolor=red](2,6)(5,3)
\psline[linecolor=red](4,7)(7,4)
\end{pspicture}  
  \caption{An example of cutting a $k$-Dyck tiling and inserting an up
  step and $k$ down steps. }
  \label{fig:stretch}
\end{figure}

  This gives the inverse map of $f$.
\end{proof}

It seems unlikely that there is a hook length formula for
$\kD(\lambda/*)$ when we fix the lower path $\lambda$ to be arbitrary.
If $n=6$, $k=2$, and $\lambda$ is the following path, then
we have $|\kD_n(\lambda/*)|=607$, a prime number.
\[
\begin{pspicture}(0,0)(12,6)
\kdyckgrid{12}{6}
\psline(0,0)(0, 1)(1, 1)(2, 1)(2, 2)(2, 3)(3, 3)(4, 3)(4, 4)(4, 5)(5, 5)(6, 5)(7, 5)(8, 5)(8, 6)(9, 6)(10, 6)(11, 6)(12, 6)
\end{pspicture}
\]
Also if $|\kD_n(\lambda/*)|=71$ for the following $\lambda$ with $n=5,k=2$.
\[
\begin{pspicture}(0,0)(10,5)
\kdyckgrid{10}{5}
\psline(0,0)(0, 1)(1, 1)(2, 1)(2, 2)(2, 3)(3, 3)(4, 3)(4, 4)(4, 5)(5, 5)(6, 5)(7, 5)(8, 5)(9, 5)(10, 5)
\end{pspicture}
\]

However, when
$\lambda$ is a zigzag path there is a nice generalization of
\eqref{eq:KW_lambda}.

\begin{thm}\label{thm:zigzag}
  Let $\lambda$ be the path $u^{n_1} d^{kn_1} u^{n_2} d^{kn_2}\cdots
u^{n_\ell} d^{kn_\ell}$, where $u$ means an up step and $d$ means a
  down step.  Then we have
\[
\sum_{D\in\kD(\lambda/*)} q^{\art_k(D)} 
=\qbinom{kn_1+n_2}{n_2}\qbinom{k(n_1+n_2)+n_3}{n_3}\cdots
\qbinom{k(n_1+\cdots+n_{\ell-1}) +n_\ell}{n_\ell},
\]
where $\qbinom{a}{b} = \frac{[a]_q!}{[b-a]_q![b]_q!}$. 
\end{thm}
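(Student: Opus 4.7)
I proceed by induction on $\ell$. The base case $\ell=1$ is immediate: $\lambda=u^{n_1}d^{kn_1}$ is the topmost $k$-Dyck path in $\kDyck(n_1)$, so the only admissible upper path is $\mu=\lambda$, the unique tiling is empty, and both sides equal $1$ (the empty product).

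For the inductive step, write $\lambda=\lambda'\cdot u^{n_\ell}d^{kn_\ell}$, where $\lambda'=u^{n_1}d^{kn_1}\cdots u^{n_{\ell-1}}d^{kn_{\ell-1}}$ is the zigzag with the last bump stripped off, and set $N=k(n_1+\cdots+n_{\ell-1})$. The key step is to establish the one-bump factorization
\[
\sum_{D\in\kD(\lambda/*)} q^{\art_k(D)} \;=\; \qbinom{N+n_\ell}{n_\ell}\cdot\sum_{D'\in\kD(\lambda'/*)}q^{\art_k(D')},
\]
which together with the inductive hypothesis applied to $\lambda'$ completes the induction. To prove this factorization I would construct a bijection
\[
\Phi:\kD(\lambda/*)\longrightarrow\kD(\lambda'/*)\times\PP_{N,n_\ell},
\]
where $\PP_{N,n_\ell}$ is the set of Young diagrams fitting in an $N\times n_\ell$ rectangle, satisfying $\art_k(D)=\art_k(D')+|P|$ whenever $\Phi(D)=(D',P)$. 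Since $\sum_{P}q^{|P|}=\qbinom{N+n_\ell}{n_\ell}$ is standard, this yields the desired identity.

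The geometric picture is that the last bump of $\lambda$ is attached at the pinch point $(N,n_1+\cdots+n_{\ell-1})$ lying on the line $y=x/k$. Given $D\in\kD(\lambda/*)$ with upper path $\mu$, one restricts $\mu$ to a path $\mu'\ge\lambda'$ by truncating at the pinch and closing it off, and records the deviation of $\mu$ from the topmost path $u^{n_\ell}d^{kn_\ell}$ over the last-bump region as a Young diagram $P\in\PP_{N,n_\ell}$. Simultaneously, the tiles of $D$ partition into those that assemble into a tiling $D'\in\kD(\lambda'/\mu')$ and those accounting for the cells of $P$, yielding the additive decomposition of $\art_k$.

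\textbf{Main obstacle.} The technical heart of the argument is verifying that the cover-inclusive property respects the decomposition at the pinch: one must show that no tile of $D$ can straddle $(N,n_1+\cdots+n_{\ell-1})$ in a way that breaks the decomposition. The zigzag geometry is essential here, because $\lambda$ touches the line $y=x/k$ at the pinch, which forces any would-be straddling tile to have its $(1,-1)$-translate violate the cover-inclusive dichotomy of being either entirely below $\lambda$ or entirely contained in a single other tile. Once this structural rigidity is established, the additivity $\art_k(D)=\art_k(D')+|P|$ follows from an analysis of how the shadow chains of cover-inclusive tiles terminate at the pinch, and $\Phi$ can be explicitly described and inverted; summing $q^{|P|}$ over $\PP_{N,n_\ell}$ then produces the $q$-binomial factor and closes the induction.
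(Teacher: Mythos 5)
Your high-level plan is the right skeleton, and it is essentially what the paper intends (the paper's own ``proof'' is a one-line pointer to the inductive argument of \cite{JSK_DT} with all details omitted): induct on $\ell$ and establish the one-bump factorization
\[
\sum_{D\in\kD(\lambda/*)} q^{\art_k(D)} \;=\; \qbinom{N+n_\ell}{n_\ell}\,\sum_{D'\in\kD(\lambda'/*)}q^{\art_k(D')},
\qquad N=k(n_1+\cdots+n_{\ell-1}),
\]
which is exactly the inductive step. The gap is in the mechanism you propose for proving it. Your picture --- split the region at the pinch point, read $P$ off as ``the deviation of $\mu$ over the last-bump region'', and show that no tile straddles the pinch --- does not match the geometry. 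Over the columns of the last bump, i.e.\ for $x\in[N,kn]$, the path $\lambda$ already sits at the maximal height $n$, so the region $\lm$ contains \emph{no cells at all} there and the ``deviation over the last-bump region'' is always empty. The cells that account for the $q$-binomial factor in fact lie over the earlier bumps, arbitrarily far to the left of the pinch. Concretely, take $k=1$, $\lambda=uudd\,ud$ and $\mu=uuuddd$: the region consists of the two cells $[0,1]\times[2,3]$ and $[1,2]\times[2,3]$, tiled by two single-cell tiles with $\art=2$; here $D'$ is forced to be empty, so both cells must be charged to $P$, yet one of them sits entirely over the first bump. Hence ``truncating $\mu$ at the pinch'' is not well defined, the tiles do not partition according to their position relative to the pinch, and the set of ``extra'' cells need not even form a Young diagram (for $\lambda=uudd\,uudd$ and $\mu=uuududdd$ it is a non-left-justified three-cell hook). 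The no-straddling lemma you single out as the main obstacle is therefore not where the difficulty lies.

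The factorization itself is true, but the workable bijection is an insertion argument rather than a region decomposition. Starting from $D'\in\kD(\lambda'/*)$, insert the $n_\ell$ peaks $ud^k$ of the last bump one at a time into the upper path, using exactly the cut-along-a-line-of-slope-$-1$-and-spread operation appearing in the proof of Theorem~\ref{thmmuqint}; each insertion also lays down a broken strip of new single-cell tiles whose total contribution to $\art_k$ is the insertion position $c_i$, and the requirement that the resulting lower path be $\lambda$ (all new peaks appended after $\lambda'$ and forming one ascent $u^{n_\ell}$) constrains the sequence $(c_1,\dots,c_{n_\ell})$ so that it encodes a partition in an $n_\ell\times N$ box; summing $q^{c_1+\cdots+c_{n_\ell}}$ then produces $\qbinom{N+n_\ell}{n_\ell}$. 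Verifying that each insertion preserves cover-inclusivity and that the whole procedure is invertible is the real content of the proof, and it is what \cite{JSK_DT} and \cite{KMPW} carry out in the case $k=1$; your write-up would need to be rebuilt around this spreading/insertion step rather than around a decomposition at the pinch.
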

\begin{proof}
  This can be proved using the same idea in the inductive proof in
  \cite{JSK_DT}. We will omit the details.
\end{proof}

\begin{problem}
Find a bijective proof of Theorem~\ref{thm:zigzag}.  
\end{problem}

\section{\texorpdfstring{$k$-Stirling permutations and the $k$-Bruhat order}{k-Stirling permutations and the k-Bruhat order}}

In this section we consider $k$-Stirling permutations which were
introduced by Gessel and Stanley \cite{Gessel1978} for $k=2$ and
studied further for general $k$ by Park \cite{Park1994}. 

A \emph{$k$-Stirling permutation of size $n$} is a permutation of the
multiset $\{1^k, 2^k,\dots, n^k\}$ such that if an integer $j$ appears
between two $i$'s then $i>j$. Let $\SP_n$ denote the set of
$k$-Stirling permutations of size $n$. We can represent a
$k$-Stirling permutation $\pi=\pi_1\pi_2\dots\pi_{kn}$ as the $n\times
kn$ matrix $M=(M_{i,j})$ defined by $M_{i,j} = 1$ if $\pi_j=i$ and
$M_{i,j}=0$ otherwise. Then the entries of $M$ are $0$'s and $1$'s
such that each column contains exactly one $1$, each row contains $k$
$1$'s, and it does not contain the following submatrix:
\[
\begin{pmatrix}
  1&0&1\\0&1&0
\end{pmatrix}.
\]
For example, see Figure~\ref{fig:k-inv}. 

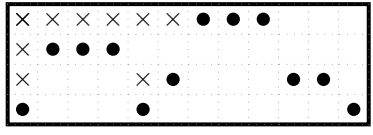
\begin{figure}
  \centering
   \psset{unit=4mm}
  \begin{pspicture}(12,-4)
    \psframe(0,0)(12,-4)
    \psgrid[gridcolor=gray,griddots=4,subgriddiv=0,gridlabels=0](0,0)(12,-4)
\rput(-.5,.5){\psdots(1,-4)(2,-2)(3,-2)(4,-2)(5,-4)(6,-3)(7,-1)(8,-1)(9,-1)
(10,-3)(11,-3)(12,-4) 
\cross(1,-1) \cross(1,-2) \cross(1,-3) 
\cross(1,-1) \cross(2,-1) \cross(3,-1) \cross(4,-1) \cross(5,-1)
\cross(6,-1) 
\cross(5,-3)
}
  \end{pspicture}
  \caption{The matrix corresponding to 
    $422243111334 \in \mathfrak{S}_4^{(3)}$, where ones are represented as dots. The
    $k$-inversions are the cells with crosses.}
  \label{fig:k-inv}
\end{figure}

A \emph{$k$-inversion} of $\pi\in \SP_n$ is a pair $(i,j)\in
[n]\times[kn]$ such that $\pi_j > i$ and the first $i$ appears after
$\pi_j$. Equivalently, we will think of a $k$-inversion as an entry
(or a cell) in the matrix of $\pi$ which has $k$ $1$'s to the right in
the same row and one $1$ below in the same column, see
Figure~\ref{fig:k-inv}. We denote the set of $k$-inversions of $\pi$
by $\INV_k(\pi)$, and $\inv_k(\pi)=|\INV_k(\pi)|$.

\begin{prop} \label{invprod}
We have  
\[
\sum_{\pi\in \kS_n} q^{\inv_k(\pi)} =[k+1]_q[2k+1]_q\cdots[(n-1)k+1]_q.
\]
\end{prop}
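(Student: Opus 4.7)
I would prove Proposition~\ref{invprod} by induction on $n$, using a simple recursive construction that peels off the $k$ copies of the value $1$. The base case $n=1$ is immediate: $\SP_1$ contains only $1^k$, which has no $k$-inversions, and the right-hand side is an empty product equal to $1$.

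For the inductive step, the key structural observation is that in any $\pi \in \SP_n$, the $k$ copies of the value $1$ occur as a contiguous block. Indeed, if some value $j$ appeared strictly between two $1$'s, the Stirling condition would force $1 > j$, which is impossible. Hence removing this block from $\pi$ (and recording its starting position $p \in \{0,1,\dots,k(n-1)\}$) and then decreasing every remaining value by $1$ yields a permutation $\pi' \in \SP_{n-1}$; this gives a bijection
\[
\SP_n \;\longleftrightarrow\; \SP_{n-1} \times \{0,1,\dots,k(n-1)\}.
\]

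The crux of the argument is to show that under this bijection $\inv_k(\pi) = \inv_k(\pi') + p$. Split the $k$-inversions $(i,j)$ of $\pi$ according to the value $i$. If $i=1$, the condition ``first $1$ appears after $j$'' forces $j \le p$, and ``$\pi_j > 1$'' is then automatic, producing exactly $p$ $k$-inversions. If $i \ge 2$, then $j$ cannot lie in the $1$-block (where $\pi_j = 1 \not > i$). For $j$ outside this block, the position-shift $j \mapsto j$ when $j \le p$ and $j \mapsto j-k$ when $j > p+k$ matches cells of $\pi$ with cells of $\pi'$, and a direct case check confirms that both conditions ``$\pi_j > i$'' and ``first $i$ after $j$'' transfer unchanged (because the first occurrence of $i$ in $\pi$ lies outside the $1$-block, and its position simply shifts by $0$ or $k$). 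Thus the $k$-inversions of $\pi$ with $i \ge 2$ biject with those of $\pi'$.

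Combining these observations,
\[
\sum_{\pi \in \SP_n} q^{\inv_k(\pi)} = \Bigl( \sum_{\pi' \in \SP_{n-1}} q^{\inv_k(\pi')} \Bigr) \sum_{p=0}^{k(n-1)} q^p,
\]
and applying the induction hypothesis to the first factor gives the desired product $\prod_{i=1}^{n-1}[ki+1]_q$. The only mildly delicate point is the case analysis verifying invariance of the $k$-inversion condition across the insertion position, handling separately the two regimes $j \le p$ and $j \ge p+k+1$; this is routine once the shift bijection is written down explicitly.
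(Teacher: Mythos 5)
Your proof is correct and follows essentially the same route as the paper's: both peel off (equivalently, insert) the contiguous block of $1$'s, observe that the $k(n-1)+1$ possible positions contribute the factor $[(n-1)k+1]_q$, and induct. Your write-up just makes explicit the verification that $\inv_k(\pi)=\inv_k(\pi')+p$, which the paper leaves implicit.
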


\begin{proof}
This is an easy induction. Note that all the 1's in a Stirling permutation form a block of consecutive letters.
Starting from $\sigma\in\mathfrak{S}_{n-1}^{(k)}$, we build a Stirling permutation in $\mathfrak{S}_{n}^{(k)}$
by increasing all letters of $\sigma$ by 1, and inserting a block $1^k$ at some position. 
The positions where we can insert this block give the factor $[(n-1)k+1]_q$.
\end{proof}

As we can see in the following lemma, a $k$-Stirling permutation is
determined by its $k$-inversions.

\begin{lem}
  For $\sigma,\pi\in\kS_n$, if $\INV_k(\sigma)=\INV_k(\pi)$, we have
  $\sigma=\pi$.
\end{lem}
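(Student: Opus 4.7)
The plan is to prove the lemma by induction on $n$, by showing how to reconstruct $\pi$ from the set $\INV_k(\pi)$. The key observation is that in any $\pi\in\kS_n$ the $k$ copies of the value $1$ must occupy $k$ consecutive positions: if some integer $j\ge 2$ appeared between two $1$'s, the Stirling condition (applied with $i=1$) would force $1>j$, which is impossible. Let $p$ be the leftmost of these $k$ consecutive positions.

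First I would recover $p$ from row $1$ of the matrix. A cell $(1,j)$ is a $k$-inversion precisely when $\pi_j>1$ and the first $1$ lies strictly to the right of column $j$; since the $k$ copies of $1$ form a single block at positions $p,\ldots,p+k-1$, both conditions are simultaneously equivalent to $j<p$. Hence the number of $k$-inversions in row $1$ is exactly $p-1$, which pins down $p$. Next, I would delete the $k$ copies of $1$ from $\pi$ and decrement the remaining letters by $1$, obtaining a new $k$-Stirling permutation $\pi'\in\kS_{n-1}$. The $k$-inversions of $\pi'$ are in explicit bijection with the $k$-inversions of $\pi$ lying in rows $\geq 2$: subtract $1$ from the row index, and subtract $k$ from the column index whenever that column exceeds $p+k-1$. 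No $k$-inversion of $\pi$ can lie in columns $p,\ldots,p+k-1$, since $\pi_j=1$ there would force an inversion row $i$ to satisfy $i<1$. Thus $\INV_k(\pi')$ is entirely determined by $\INV_k(\pi)$ together with the already-recovered value $p$.

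Applying the induction hypothesis to $\pi'$ recovers $\pi'$, and combining $p$ with $\pi'$ reconstructs $\pi$. Therefore, if $\sigma,\pi\in\kS_n$ satisfy $\INV_k(\sigma)=\INV_k(\pi)$, running this recovery procedure on both inputs returns the same permutation, giving $\sigma=\pi$. The only point requiring real care—though it is essentially routine—is verifying that the claimed bijection between the row-$\geq 2$ $k$-inversions of $\pi$ and the $k$-inversions of $\pi'$ actually respects both defining conditions ``$\pi_j>i$'' and ``first $i$ strictly after $j$'' under the reindexing; this comes down to a short case analysis depending on whether the column in question lies to the left of $p$ or to the right of $p+k-1$.
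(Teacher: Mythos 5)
Your proof is correct, but it takes a genuinely different route from the paper's. The paper argues directly by contradiction in a few lines: assuming $\sigma\neq\pi$, take the smallest index $r$ with $\sigma_r\neq\pi_r$, say $m=\sigma_r<\pi_r$; the Stirling condition forces every copy of $m$ in $\pi$ to lie strictly after position $r$, so $(m,r)\in\INV_k(\pi)$ while $(m,r)\notin\INV_k(\sigma)$ because $\sigma_r=m$. No induction, no reconstruction. Your argument instead recovers $\pi$ from $\INV_k(\pi)$ by induction on $n$: row $1$ of the diagram determines the position $p$ of the block $1^k$, and deleting that block reduces to $\kS_{n-1}$. All your steps check out --- the $1$'s do form a block, the row-$1$ inversions are exactly the cells $(1,j)$ with $j<p$, no inversion can sit in a column occupied by a $1$, and the reindexing is an honest bijection on the remaining inversions since it preserves both conditions ``$\pi_j>i$'' and ``first $i$ after column $j$''. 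What your approach buys is an explicit inverse algorithm, and it is precisely the reverse of the insertion step used in the paper's proof of Proposition~\ref{invprod} (pass from $\kS_{n-1}$ to $\kS_n$ by incrementing and inserting $1^k$ into one of $(n-1)k+1$ slots, creating $p-1$ new inversions); so your argument simultaneously re-derives the generating function $[k+1]_q[2k+1]_q\cdots[(n-1)k+1]_q$. The cost is length: the paper's local ``first disagreement'' argument is shorter and avoids setting up the reindexing bookkeeping.
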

\begin{proof}
  Suppose that $\sigma\ne \pi$. Let $r$ be the smallest index
  satisfying $\sigma_r\ne \pi_r$. We can assume that
  $\sigma_r<\pi_r$. Let $m=\sigma_r$. Note that $\pi_j=m$ for some
  $j>r$. Then $\pi$ does not have the integer $m$ in the first $r$
  positions because otherwise $\pi_i=m$ for $i<r$ and we get $\pi_i
  <\pi_r >\pi_j$ which is forbidden. Then $(m,r)\in \INV_k(\pi)$ but
  $(m,r)\not\in\INV_k(\sigma)$, which is a contradiction. Thus we have
  $\sigma=\pi$.
\end{proof}

We are now ready to define the $k$-Bruhat order on $k$-Stirling
permutations.

\begin{defn}\label{def:Bruhat}
We define the \emph{$k$-Bruhat order} on $\SP_n$ given by the cover
relation $\sigma\lessdot\pi$ if $\pi$ is obtained from $\sigma$ by exchanging
the two numbers in positions $a_1$ and $a_{k+1}$ for some integers
$a_1<a_2<\dots<a_{k+1}$ satisfying the following conditions:
\begin{enumerate}
\item $\sigma_{a_1}=\sigma_{a_2}=\dots=\sigma_{a_k}<\sigma_{a_{k+1}}$, and
\item for all $a_k<i<a_{k+1}$ we have either $\sigma_i<\sigma_{a_1}$
  or $\sigma_i>\sigma_{a_{k+1}}$.
\end{enumerate}
In fact, if $k\ge2$, then for all $a_1<i<a_{k+1}$ with $i\ne a_j$ we
always have $\sigma_i<\sigma_{a_1}$, see Lemma~\ref{lem:bruhat}.
\end{defn}

Note that the $1$-Bruhat order is the usual Bruhat order. 
Figure~\ref{kbruhat} illustrates the $k$-Bruhat order.

\begin{figure} \centering
\psset{unit=4mm}
\begin{pspicture}(-5,0)(5,12)
                \rput(0,12){332211}
\psline(-0.2,11.5)(-2,10.5)\psline(0.2,11.5)(2,10.5)
       \rput(-2,10){332112}      \rput(2,10){322311}
\psline(-2.2,9.5)(-4,8.5)\psline(-1.8,9.5)(0,8.5)\psline(1.8,9.5)(0.2,8.5)\psline(2.2,9.5)(4,8.5)
\rput(-4,8){331122}    \rput(0,8){322113} \rput(4,8){223311}
\psline(-4,7.5)(-4,6.5)\psline(0,7.5)(0,6.5)\psline(4,7.5)(4,6.5)
\psline(-3.6,7.5)(-0.4,6.5)\psline(0.4,7.5)(3.6,6.5)
\rput(-4,6){311322}    \rput(0,6){321123} \rput(4,6){223113}
\psline(-4,5.5)(-4,4.5)   \psline(0,5.5)(0,4.5)  \psline(4,5.5)(4,4.5)
\psline(-3.6,5.5)(-0.4,4.5)\psline(0.4,5.5)(3.6,4.5)
\rput(-4,4){113322}    \rput(0,4){311223} \rput(4,4){221133}
\psline(-4,3.5)(-2.2,2.5)\psline(-0.2,3.5)(-1.8,2.5)\psline(0.2,3.5)(1.8,2.5)\psline(4,3.5)(2.2,2.5)
        \rput(-2,2){113223}      \rput(2,2){211233}
\psline(-2,1.5)(-0.2,0.5) \psline(2,1.5)(0.2,0.5)
                 \rput(0,0){112233}
\end{pspicture}
 \caption{The Hasse diagram of $\mathfrak{S}_3^{(2)}$. \label{kbruhat} }
\end{figure}
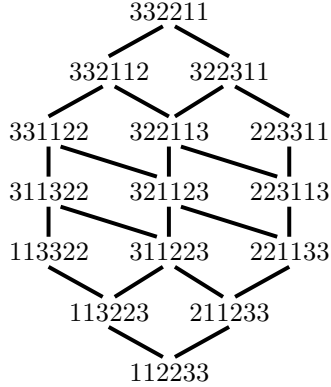

\begin{remark}
  The elements of $\mathfrak{S}_n^{(k)}$ where all occurrences of $i$
  are consecutive for any $i$, form a subset which is in natural
  bijection with $\Sym_n$.  In general ($k>1$ and $n>2$), the
  induced order on this subset is strictly contained in the Bruhat
  order, and strictly contains the left weak order.
\end{remark}

\begin{lem}\label{lem:bruhat}
  In this lemma we use the notation in Definition~\ref{def:Bruhat}.
  If $k\ge2$, then we have $\sigma_i<\sigma_{a_1}$ for all
  $a_1<i<a_{k+1}$ with $i\ne a_j$.
\end{lem}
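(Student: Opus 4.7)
The plan is to exploit the Stirling property --- the defining condition of $\SP_n$ --- applied to the value $m:=\sigma_{a_1}$ in both $\sigma$ and the permutation $\pi$ produced by the swap. Since the $k$-Bruhat order is a relation on $\SP_n$, the cover $\sigma\lessdot\pi$ forces $\pi\in\SP_n$; the swap at positions $a_1$ and $a_{k+1}$ moves the copy of $m$ at $a_1$ over to $a_{k+1}$ and leaves the other $m$'s in place, so the $k$ positions of $m$ in $\sigma$ are $a_1<\dots<a_k$, while in $\pi$ they are $a_2<\dots<a_{k+1}$.

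Given $i$ with $a_1<i<a_{k+1}$ and $i\notin\{a_1,\dots,a_{k+1}\}$, I would split into two cases according to which consecutive pair of $a_j$'s brackets $i$. If $a_j<i<a_{j+1}$ for some $1\le j\le k-1$, then $i$ sits strictly between two copies of $m$ in $\sigma$, so the Stirling condition on $\sigma$ forces $\sigma_i\le m$; since all $k$ copies of $m$ are already placed at $a_1,\dots,a_k$ and $i$ is not among them, this improves to $\sigma_i<m$. If instead $a_k<i<a_{k+1}$, then in $\pi$ the positions $a_k$ and $a_{k+1}$ are both copies of $m$, so Stirling applied to $\pi$ yields $\pi_i<m$; and since $\pi$ and $\sigma$ agree at every position outside $\{a_1,a_{k+1}\}$, this reads $\sigma_i<m$. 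Together these two cases handle every $i$ in the desired range.

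The hypothesis $k\ge2$ plays a transparent role in this scheme: it is exactly what makes both of the ``between two copies of $m$'' intervals above nonvacuous --- in $\sigma$ one needs $a_1\ne a_2$, and in $\pi$ one needs $a_k\ne a_{k+1}$. For $k=1$ each of $\sigma$ and $\pi$ contains only a single copy of $m$, the Stirling constraint becomes vacuous, and indeed condition~(2) of Definition~\ref{def:Bruhat} genuinely allows the alternative $\sigma_i>\sigma_{a_{k+1}}$, as in the ordinary Bruhat order.

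I do not anticipate a serious obstacle; the one conceptual step one must not skip is the observation that $\pi$ is itself a $k$-Stirling permutation, which is what licenses the use of the Stirling property on $\pi$ as well as on $\sigma$. Once that is registered, the argument is a two-line case split using only the very definition of $\SP_n$.
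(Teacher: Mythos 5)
Your proposal is correct and matches the paper's own argument in essence: both derive the inequality by applying the Stirling condition to $\sigma$ (where the copies of $m=\sigma_{a_1}$ sit at $a_1,\dots,a_k$) for indices up to $a_k$, and to $\pi$ (where the copies of $m$ sit at $a_2,\dots,a_{k+1}$) for the remaining indices, noting that $\sigma$ and $\pi$ agree off $\{a_1,a_{k+1}\}$ and that $k\ge2$ makes the two ranges cover all of $(a_1,a_{k+1})$. Your explicit remark that $\pi\in\SP_n$ must be invoked is exactly the point the paper uses implicitly, so there is nothing to add.
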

\begin{proof}
  By the definition of $k$-Stirling permutation, for all $a_1< i<
  a_{k}$ with $i\ne a_j$ we have $\sigma_i < \sigma_k$, and for all
  $a_2<i<a_{k+1}$ with $i\ne a_j$ we have $\pi_i <
  \pi_k=\sigma_k$. Thus we have $\sigma_i=\pi_i<\sigma_k$ for all
  $a_1<i<a_{k+1}$ with $i\ne a_j$.
\end{proof}

\begin{lem}\label{lem:cover}
  Let $\sigma\lessdot\pi$ in $\kS_n$. Then $\INV_k(\pi)$ is obtained
  from $\INV_k(\sigma)$ by adding one cell and moving some cells
  (maybe none) to the west or north. Moreover, if a cell is moved to
  the west (respectively, north), then the new location of the cell is
  south (respectively, to east) of the newly added cell in the same
  column (respectively, row).
\end{lem}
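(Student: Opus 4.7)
The plan is to track how matrix cells $(i,j)$ change $k$-inversion status under the cover move. Writing $m=\sigma_{a_1}$ and $M=\sigma_{a_{k+1}}$, the move changes $\sigma$ only at the two columns $a_1,a_{k+1}$ (with $\pi_{a_1}=M$ and $\pi_{a_{k+1}}=m$), and it affects the leftmost-occurrence column $f_\rho(i)$ only for $i\in\{m,M\}$. Since $a_1,\dots,a_k$ are exactly the $k$ positions of $m$ in $\sigma$, one has $f_\sigma(m)=a_1$ and $f_\pi(m)=a_2$. For row $M$, no $M$ of $\sigma$ can sit strictly between $a_1$ and $a_{k+1}$ (using Lemma \ref{lem:bruhat} when $k\ge 2$, and the cover condition $\sigma_i<m$ or $\sigma_i>M$ directly when $k=1$), so either $f_\sigma(M)<a_1$ and $f_\pi(M)=f_\sigma(M)$, or $f_\sigma(M)=a_{k+1}$ and $f_\pi(M)=a_1$.

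The central claim is that $\INV_k(\pi)\triangle\INV_k(\sigma)$ decomposes as the single new cell $(m,a_1)$ together with disjoint pairs of two kinds: for each row $i$ with $m<i<M$ and $f_\sigma(i)>a_{k+1}$, a \emph{west pair} in which $(i,a_{k+1})$ is removed and $(i,a_1)$ is added; and for each column $j$ with $a_1<j<a_{k+1}$ and $\sigma_j>M$, a \emph{north pair} in which $(M,j)$ is removed and $(m,j)$ is added. Once this is established the ``moreover'' conclusion is immediate: every westbound destination $(i,a_1)$ has $i>m$ and so lies in the same column as $(m,a_1)$ strictly below it, and every northbound destination $(m,j)$ has $j>a_1$ and lies in the same row as $(m,a_1)$ strictly to its right. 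Lemma \ref{lem:bruhat} also shows that the north pairs are automatically empty whenever $k\ge 2$.

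To verify the claim I would split on $(i,j)$ across the five ranges $\{i<m\},\{i=m\},\{m<i<M\},\{i=M\},\{i>M\}$ and the five ranges $\{j<a_1\},\{j=a_1\},\{a_1<j<a_{k+1}\},\{j=a_{k+1}\},\{j>a_{k+1}\}$. Outside the ``cross'' strips $i\in\{m,M\}$ or $j\in\{a_1,a_{k+1}\}$ nothing changes, since neither $\pi_j$ nor $f_\pi(i)$ differs from its $\sigma$-counterpart. Inside, a short direct calculation uses only the explicit values of $\pi_{a_1},\pi_{a_{k+1}},f_\pi(m),f_\pi(M)$ listed above, together with the observation (from the cover condition and Lemma \ref{lem:bruhat}) that $f_\sigma(i)\notin[a_1,a_{k+1}]$ whenever $m<i<M$, which collapses the strip $\{m<i<M\}$ to the two sub-cases $f_\sigma(i)<a_1$ (nothing happens) and $f_\sigma(i)>a_{k+1}$ (a west pair).

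The main obstacle is simply bookkeeping: the $5\times 5$ case table has to be organized so that Lemma \ref{lem:bruhat} is invoked exactly where needed, and so that the two sub-cases for row $M$ are tracked to see where the north pairs actually arise. Once this case structure is set up, each box of the table collapses to a one-line membership check.
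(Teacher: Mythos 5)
Your decomposition is exactly the one the paper's own proof asserts: add the cell $(\sigma_{a_1},a_1)$, slide the inversions $(r,a_{k+1})$ with $\sigma_{a_1}<r<\sigma_{a_{k+1}}$ west to column $a_1$, and (only when $k=1$) slide the inversions $(\sigma_{a_{k+1}},j)$ with $a_1<j<a_{k+1}$ north to row $\sigma_{a_1}$; your $5\times 5$ case table is simply a fuller verification of that same statement, with Lemma~\ref{lem:bruhat} invoked in the same places. The argument is correct and takes essentially the same route as the paper, which states the decomposition without the detailed check (and with the inequality on $r$ inadvertently written backwards).
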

\begin{proof}
  Suppose that $\pi$ is obtained from $\sigma$ as described in
  Definition~\ref{def:Bruhat}. 
  Then $\INV_k(\pi)$ is obtained from $\INV_k(\sigma)$ as follows.
  We add the inversion $(\sigma_{a_1},a_1)$, and change each $k$-inversion 
  of the form $(r,a_{k+1})$ for some $\sigma_{a_{k+1}} < r < \sigma_{a_1}$ to $(r,a_1)$.
  Furthermore if $k=1$, we change each $k$-inversion of the form $(\sigma_{a_{k+1}},j)$ for some $j<a_{k+1}$ to
  $(\sigma_{a_1},j)$.
\end{proof}

We note that in Lemma~\ref{lem:cover}, moving cells to north can happen
only when $k=1$.

By Lemma~\ref{lem:cover} we have $\inv_k(\pi)=\inv_k(\sigma)+1$ if
$\sigma\lessdot\pi$. This proves:

\begin{prop}
  Endowed with the $k$-Bruhat order, $\kS_n$ is a graded poset with rank function $\inv_k$.
\end{prop}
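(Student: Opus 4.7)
The plan is to deduce the proposition directly from Lemma~\ref{lem:cover}. That lemma shows every cover $\sigma\lessdot\pi$ in the $k$-Bruhat order satisfies $\inv_k(\pi)=\inv_k(\sigma)+1$, because passing from $\INV_k(\sigma)$ to $\INV_k(\pi)$ adjoins exactly one new cell while merely relocating others within their rows or columns. This is the defining property of a rank function, and as a byproduct yields antisymmetry of the order: a nontrivial oriented cycle of covers would force $\inv_k$ to both strictly increase and return to its starting value.

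To upgrade ``rank function'' to ``graded poset'' I would identify the minimum element $\hat{0}=1^k2^k\cdots n^k$, whose matrix places the $1$'s of each row strictly to the left of those in every later row; no cell can therefore simultaneously host $k$ ones to its right in its row and a one below in its column, so $\inv_k(\hat{0})=0$. Together with the cover-to-cover increment of one, this forces every saturated chain from $\hat{0}$ up to $\pi$ to have length exactly $\inv_k(\pi)$.

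The only point requiring any verification is that every non-minimal $\pi$ admits at least one downward cover---equivalently, that positions $a_1<\cdots<a_{k+1}$ as in Definition~\ref{def:Bruhat} exist with $\sigma$ and $\pi$ swapped. For this I would pick any descent $\pi_i>\pi_{i+1}$ of $\pi$, set $a_1=i$, and take $a_2<\cdots<a_{k+1}$ to be the $k$ positions of the letter $a:=\pi_{i+1}$. The defining property of $k$-Stirling permutations (no smaller letter appears between two copies of a larger one) forbids any copy of $a$ from appearing before position $i+1$, so $a_1<a_2$, and it also forces $\pi_{i'}<a$ for all $a_k<i'<a_{k+1}$, which gives the intermediate-position condition. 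This is the only mildly nonroutine step, and it is really just a direct unfolding of the definition.
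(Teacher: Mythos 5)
Your proof is correct and takes essentially the same route as the paper, whose entire argument is the single observation (Lemma~\ref{lem:cover}) that $\inv_k(\pi)=\inv_k(\sigma)+1$ whenever $\sigma\lessdot\pi$. Your additional verifications --- that $1^k2^k\cdots n^k$ is the unique minimal element, that it has $\inv_k=0$, and that every other element admits a downward cover via a descent --- are correct and in fact supply details the paper leaves implicit (the only small point you gloss over is that the permutation obtained by undoing the cover is again a valid $k$-Stirling permutation, which follows readily from the Stirling condition).
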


The following generalization of 132-avoiding permutations is straightforward and natural.
\begin{defn}
  A $k$-Stirling permutation $\sigma\in \kS_n$ is 132-avoiding if there is no $1\leq i<j<k \leq kn$
  such that $\sigma_i<\sigma_k<\sigma_j$. Let $\kS_n(132)$ denote the set of 132-avoiding $k$-Stirling permutations in $\kS_n$.
\end{defn}

In a $k$-Dyck tilings without tiles, the lower path and upper path are the same.
So these tilings are trivially in bijection with $k$-Dyck paths. 
As for the $k$-Stirling permutations, we have:

\begin{prop}\label{prop:132_k}
  The inversions of a 132-avoiding $k$-Stirling permutation $\sigma$
  are arranged as the cells of a top-left justified Ferrers
  diagram. Define a path $\alpha(\sigma)$ from the bottom left to the
  top right corner, and following the boundary of this Ferrers diagram
  with up and right steps.  Then $\alpha$ is a bijection between
  132-avoiding $k$-Stirling permutations and $k$-Dyck paths of length
  $n$, in particular both are counted by the Fuss-Catalan numbers
  $\frac{1}{kn+1}\binom{(k+1)n}{n}$.
\end{prop}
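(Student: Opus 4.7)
The plan is to verify the three assertions of the proposition in sequence: that $\INV_k(\sigma)$ is a top-left Ferrers diagram, that its boundary is a $k$-Dyck path, and that $\alpha$ is a bijection, then invoke the classical Fuss-Catalan enumeration of $k$-Dyck paths to conclude.

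For the Ferrers-diagram claim, I would show that $\INV_k(\sigma)$ is closed under moves weakly to the upper-left: if $(i,j)\in\INV_k(\sigma)$ and $(i',j')$ with $i'\le i$, $j'\le j$ is a cell that does not contain a $1$ in the matrix of $\sigma$, then $(i',j')\in\INV_k(\sigma)$. It suffices to treat the two atomic moves. Consider for instance moving up to $(i',j)$ with $i'<i$: if $(i',j)$ were not a $k$-inversion, then the first $1$ of row $i'$ would lie at some column $c'\le j$, so $\sigma_{c'}=i'<i$; together with $\sigma_j>i$ and with the column $c>j$ of the first $1$ of row $i$ (where $\sigma_c=i$), this produces the $132$-pattern $\sigma_{c'}<\sigma_c<\sigma_j$ at positions $c'<j<c$, a contradiction. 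The left move is symmetric. For the $k$-Dyck claim, writing $\lambda_i$ for the number of $k$-inversions in row $i$, the condition $y\ge x/k$ on the boundary path reduces to $\lambda_i\le k(n-i)$. This holds because every cell of $\INV_k(\sigma)$ in row $i$ sits in a column whose unique $1$ lies in one of the rows $i+1,\ldots,n$, and there are only $k(n-i)$ such columns.

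The bijective statement is the main content. Injectivity of $\alpha$ on $\kS_n(132)$ follows from the preceding lemma, which states that a $k$-Stirling permutation is determined by its $k$-inversion set. For surjectivity, I would construct an explicit inverse by induction on $n$: given a $k$-Dyck path with shape $\lambda$ satisfying $\lambda_n=0$ and $\lambda_i\le k(n-i)$, the position of the $n^k$-block in the target permutation is forced by the shape, and after deleting those $k$ columns together with the bottom row of the $n\times kn$ matrix the residual shape is a valid $k$-Dyck shape of length $n-1$, so induction applies. The main obstacle is to check that this procedure is well-defined and produces a $132$-avoiding Stirling permutation realizing exactly the prescribed $\INV_k$; this requires careful bookkeeping of which columns remain free at each inductive step and verification that appending the $n^k$-block at the forced position cannot introduce a $132$-pattern. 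Once the bijection is established, the Fuss-Catalan count $\frac{1}{kn+1}\binom{(k+1)n}{n}$ of $k$-Dyck paths (classical, see the cited reference to Dvoretzky) yields the enumeration.
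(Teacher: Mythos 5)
The paper offers essentially no proof of Proposition~\ref{prop:132_k} (it only remarks that the argument is the same as for $k=1$), so your write-up is necessarily more detailed than the source. Your first two steps are correct and complete: the $132$-avoidance argument for closure of $\INV_k(\sigma)$ under the two atomic upper-left moves is exactly right (and one checks easily that no cell weakly north-west of a $k$-inversion can contain a $1$, so the Ferrers claim follows), and the bound $\lambda_i\le k(n-i)$, obtained by counting the $k(n-i)$ columns whose dot lies in a row below row $i$, is precisely the condition for the boundary path to stay weakly above $y=x/k$. Injectivity via the lemma that a $k$-Stirling permutation is determined by its $k$-inversion set is also the right appeal.

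The gap is in surjectivity, and it is not just the bookkeeping you flag: the inductive scheme itself rests on a false premise. The $k$ copies of $n$ in a ($132$-avoiding) $k$-Stirling permutation need \emph{not} be consecutive --- only values separated by strictly larger entries are forced to be adjacent, so it is the $1$'s, not the $n$'s, that always form a block (e.g.\ $322311\in\kS_3$ with $k=2$ has its $3$'s in positions $1$ and $4$). Hence ``the position of the $n^k$-block'' is ill-defined, and even reading ``block'' as ``the $k$ positions of the letter $n$,'' those positions are not forced by any single row of $\lambda$: for $n=3$, $k=2$ the shapes $(4,0)$, $(4,1)$, $(4,2)$ put the $3$'s at $\{3,4\}$, $\{1,4\}$, $\{1,2\}$ respectively, all with the same bottom row. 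The clean repair is to run your induction from the top instead of the bottom: row $1$ of the matrix consists of $k$ consecutive dots, and $(1,j)$ is a $k$-inversion exactly when $j$ precedes this block, so the $1$'s must occupy columns $\lambda_1+1,\dots,\lambda_1+k$ (possible since $\lambda_1\le k(n-1)$); deleting these $k$ columns and the top row leaves the shape $(\lambda_2,\dots,\lambda_n)$ inside an $(n-1)\times k(n-1)$ matrix, and one checks that reinserting the block creates neither a forbidden Stirling pattern nor a $132$-pattern and adds exactly the inversions $(1,1),\dots,(1,\lambda_1)$. This top-down construction is the one the authors actually spell out later, in the proof of Proposition~\ref{bijinthist}, where they build $\pi$ ``by putting dots starting from the first row.''
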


The proof is simple and similar to the case $k=1$. For example, see
Figure~\ref{fig:alpha_k}. 

\begin{prop} \label{pisigmaincl}
  Suppose that $\sigma\in\kS_n$ is $132$-avoiding.  Then, for
  $\pi\in\kS_n$, we have $\sigma\le\pi$ if and only if
  $\INV_k(\sigma)\subseteq \INV_k(\pi)$.
\end{prop}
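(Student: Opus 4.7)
The plan is to prove the two implications separately, both by induction.

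For the forward direction ($\Rightarrow$), I would induct on the length of a saturated chain from $\sigma$ to $\pi$. The base case $\sigma=\pi$ is trivial. For the inductive step, pick $\tau$ with $\sigma\le\tau\lessdot\pi$ and assume $\INV_k(\sigma)\subseteq\INV_k(\tau)$. By Lemma~\ref{lem:cover}, $\INV_k(\pi)$ is obtained from $\INV_k(\tau)$ by adding the cell $(\tau_{a_1},a_1)$ and possibly moving cells to the west or (only if $k=1$) to the north. The only way a cell of $\INV_k(\sigma)$ can fail to lie in $\INV_k(\pi)$ is to be among the moved cells, namely either $(r,a_{k+1})$ with $\tau_{a_{k+1}}<r<\tau_{a_1}$ or, when $k=1$, $(\tau_{a_{k+1}},j)$ with $j<a_{k+1}$. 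I would argue that such a cell cannot belong to $\INV_k(\sigma)$: since $\INV_k(\sigma)$ is top-left justified, the presence of such an inversion in $\sigma$, combined with the configuration forced by $\tau_{a_1}<\tau_{a_{k+1}}$ and Lemma~\ref{lem:bruhat}, would produce a $132$-pattern in $\sigma$.

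For the backward direction ($\Leftarrow$), I would induct on $\inv_k(\pi)-\inv_k(\sigma)\ge 0$. If this difference is $0$, the preceding lemma that a $k$-Stirling permutation is determined by $\INV_k$ gives $\sigma=\pi$. Otherwise the plan is to produce a cover $\pi'\lessdot\pi$ with $\INV_k(\sigma)\subseteq\INV_k(\pi')$, so that by induction $\sigma\le\pi'\lessdot\pi$. To find $\pi'$, I would locate a cell $c=(i,j)\in\INV_k(\pi)\setminus\INV_k(\sigma)$ lying on a ``corner'' of $\INV_k(\pi)$ relative to the Ferrers shape of $\INV_k(\sigma)$; the fact that $\INV_k(\sigma)$ is itself a Ferrers diagram guarantees that such a boundary cell exists outside $\INV_k(\sigma)$. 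I would then read from $c$ the positions $b_1<\dots<b_{k+1}$ of an inverse cover: $b_1=j$, $b_{k+1}$ is the column where the $1$ of row $i$ sits in $\pi$, and the intermediate $b_\ell$ are the other positions of the value $\pi_j$; verifying the cover conditions of Definition~\ref{def:Bruhat} for $\pi'$ and $\pi$, and checking that the moved cells produced by this inverse cover all lie outside $\INV_k(\sigma)$, completes the step.

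The main obstacle will be the construction in the ($\Leftarrow$) direction: correctly identifying a corner cell outside $\INV_k(\sigma)$ that corresponds to an actual cover in the $k$-Bruhat order, and then checking that applying the inverse transposition neither destroys any inversion of $\sigma$ nor produces a non-$k$-Stirling word. Both of these rely delicately on the Ferrers shape of $\INV_k(\sigma)$, so the $132$-avoidance hypothesis is used in an essential way in both implications.
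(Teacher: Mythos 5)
Your plan is the paper's proof: the forward direction is exactly the paper's (one-line) argument via Lemma~\ref{lem:cover} plus the Ferrers shape of $\INV_k(\sigma)$, and the backward direction is the same induction on $\inv_k(\pi)-\inv_k(\sigma)$ with the same strategy of exhibiting a cover $\pi'\lessdot\pi$ below $\pi$. Two details of your sketch for the backward step would go wrong if executed literally, and the paper's choices show how to fix them. First, your selection rule should not be a vague ``corner'' of $\INV_k(\pi)$ (which is not a Ferrers diagram, so corners are not well defined): the paper takes $(i,j)\in\INV_k(\pi)\setminus\INV_k(\sigma)$ with $j$ \emph{as large as possible}, and it is precisely this maximality, combined with the Ferrers shape of $\INV_k(\sigma)$, that forces $\pi_t<i$ for all $j<t<a_1$ (where $a_1<\cdots<a_k$ are the positions of the value $i$ in $\pi$); this is what guarantees that the swap yields a genuine $k$-Stirling permutation and a genuine cover. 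Second, your identification of the intermediate positions is off: in Definition~\ref{def:Bruhat} the $k$ equal positions must carry the repeated \emph{smaller} value, so the inverse cover uses $b_1=j$ together with $b_2,\dots,b_{k+1}=a_1,\dots,a_k$, the occurrences of the value $i$ (the row index of the chosen cell) in $\pi$ --- not ``the other positions of the value $\pi_j$'' --- and $\pi'$ is obtained by exchanging positions $j$ and $a_k$. With those corrections the remaining verification (that the cells displaced by this inverse cover lie south or east of $(i,j)$ and hence outside the Ferrers diagram $\INV_k(\sigma)$) goes through exactly as you anticipate.
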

\begin{proof}
  Since $\sigma$ is $132$-avoiding, $\INV(\sigma)$ is a Ferrers
  diagram, say $\lambda$.  By Lemma~\ref{lem:cover} if $\tau\lessdot
  \rho$ and $\lambda\subset \INV_k(\tau)$, then we also have
  $\lambda\subset\INV_k(\rho)$. This implies the ``only if'' part.

  We will prove the ``if'' part by induction on the number $m =
  \inv_k(\pi)-\inv_k(\sigma)$. If $m=0$, it is true. Suppose $m>0$.
  Let $(i,j)$ be an element in $\INV_k(\pi)\setminus\INV_k(\sigma)$
  with $j$ as large as possible. Then there are integers
  $a_1<a_2<\dots<a_k$ such that
  $\pi_{a_1}=\pi_{a_2}=\dots=\pi_{a_k}=i$ and $a_1>j$. Then we have
  $\pi_t<i$ for all $j<t<a_1$ by the maximality of $j$. Let $\pi'$ be
  obtained from $\pi$ by exchanging the two integers in positions $j$
  and $a_k$. Then $\pi'\lessdot\pi$. By Lemma~\ref{lem:cover}
  $\INV_k(\pi')$ is obtained from $\INV_k(\pi)$ by removing the cell
  $(i,j)$ and moving some cell located south or east of $(i,j)$. Thus
  $\INV_k(\pi')$ still contains $\INV_k(\sigma)$ which is a Ferrers
  diagram. By induction we have $\sigma\le \pi'$, which completes the
  proof.
\end{proof}

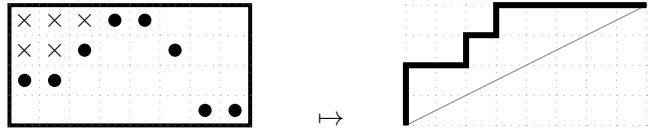
\begin{figure}
  \centering
   \psset{unit=4mm}
  \begin{pspicture}(8.2,4)
    \psframe(0,0)(8,4)
    \psgrid[gridcolor=gray,griddots=4,subgriddiv=0,gridlabels=0](0,0)(8,4)
    \psdots(0.5,1.5)(1.5,1.5)(2.5,2.5)(3.5,3.5)(4.5,3.5)(5.5,2.5)(6.5,0.5)(7.5,0.5)
    \rput(0.5,3.5){$\times$}
    \rput(0.5,2.5){$\times$}
    \rput(1.5,3.5){$\times$}
    \rput(1.5,2.5){$\times$}
    \rput(2.5,3.5){$\times$}
  \end{pspicture}  \qquad $\mapsto$ \qquad
 \begin{pspicture}(8,4)
  \kdyckgrid{8}{4}
  \psline[linewidth=0.8mm](0,0)(0, 2)(2, 2)(2, 3)(3, 3)(3, 4)(8, 4)
 \end{pspicture}
  \caption{The bijection $\alpha$ in Proposition~\ref{prop:132_k}.}
  \label{fig:alpha_k}
\end{figure}

\begin{prop}  \label{bijinthist}
 Let $\sigma\in\kS_n(132)$, and $\mu=\alpha(\sigma)$ the corresponding $k$-Dyck path.
 There is a bijection between the interval $\{ \pi : \pi\geq\sigma \}$ in $\kS_n$ and $k$-Hermite histories 
 of shape $\mu$. It is such that $\inv_k(\pi) - \inv_k(\sigma)$ is sent to the sum of weights in the Hermite
 history. 
\end{prop}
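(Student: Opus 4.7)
The plan is to bijectively read off a $k$-Hermite history from $\pi$ by recording, for each up step $u_i$ of $\mu$, the number of ``new'' $k$-inversions of $\pi$ in the corresponding matrix row. Writing $\lambda=\INV_k(\sigma)$, the path $\mu$ has exactly one up step $u_i$ bordering each row $i$ of the $n\times kn$ matrix, located at $(\lambda_i,n-i)$, and a direct count of squares between $u_i$ and the line $y=x/k$ gives $\HT(u_i)=k(n-i)-\lambda_i+1$. For $\tau\in\kS_n$, let $r_i(\tau)$ denote the number of $k$-inversions of $\tau$ in row $i$, so that $\sum_i r_i(\tau)=\inv_k(\tau)$. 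I would then define $\Phi(\pi)$ by setting the label of $u_i$ equal to $\ell_i=r_i(\pi)-\lambda_i$; weight preservation is then immediate.

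The crux is the following lemma, which I would prove first: the map $\tau\mapsto (r_1(\tau),\dots,r_n(\tau))$ is a bijection from $\kS_n$ onto $\{(r_1,\dots,r_n):0\le r_i\le k(n-i)\}$. For injectivity I would use the recursive description of $\kS_n$ from the proof of Proposition~\ref{invprod}: the $k$ copies of $1$ in $\tau$ are forced to form a consecutive block, say at positions $p+1,\dots,p+k$; since every other letter exceeds $1$, we have $r_1(\tau)=p$, which determines this block. Removing the block and decrementing gives $\tau''\in\kS_{n-1}$. For $i\ge 2$ any $k$-inversion $(i,j)$ of $\tau$ satisfies $\tau_j>i\ge 2$, so $j$ is not a position of a $1$; the shift of column indices then yields a bijection between row $i$ of $\INV_k(\tau)$ and row $i-1$ of $\INV_k(\tau'')$, giving $r_i(\tau)=r_{i-1}(\tau'')$. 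Induction delivers injectivity, and a cardinality comparison---both sides have size $\prod_{i=1}^{n}(k(n-i)+1)$, matching Proposition~\ref{invprod} at $q=1$---upgrades it to a bijection.

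Given this lemma, it suffices to show that under $\tau\mapsto(r_1,\dots,r_n)$ the interval $\{\pi:\pi\ge\sigma\}$ corresponds exactly to $\{r:\lambda_i\le r_i\le k(n-i)\}$. The inclusion ``$\subseteq$'' is immediate from Proposition~\ref{pisigmaincl}: $\pi\ge\sigma$ implies $\INV_k(\pi)\supseteq\lambda$, so $r_i(\pi)\ge\lambda_i$. The reverse inclusion is the main obstacle. Given $\pi\in\kS_n$ with $r_i(\pi)\ge\lambda_i$ for all $i$, I would show by induction on $i$ that cells $(i,1),\dots,(i,\lambda_i)$ all lie in $\INV_k(\pi)$, which yields $\INV_k(\pi)\supseteq\lambda$ and hence $\pi\ge\sigma$ by Proposition~\ref{pisigmaincl}. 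For the inductive step, the hypothesis combined with $\lambda_i\le\lambda_{i-1}$ (since $\lambda$ is a Ferrers shape) forces $\pi_j\ge i$ for $j\le\lambda_i$; were $\pi_j=i$ for some such $j$, the first $i$ in $\pi$ would occur at a position $\le\lambda_i$, leaving at most $\lambda_i-1$ positions before it and hence $r_i(\pi)\le\lambda_i-1$, contradicting the hypothesis. Hence $\pi_j>i$ for $j\le\lambda_i$ and the first $i$ appears past position $\lambda_i$, making each $(i,j)$ with $j\le\lambda_i$ a $k$-inversion, as required.
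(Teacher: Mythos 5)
Your proposal is correct and follows essentially the same route as the paper: both assign to the up step in row $i$ the number of $k$-inversions of $\pi$ in that row lying below $\mu$ (your $r_i(\pi)-\lambda_i$), and both establish bijectivity by a row-by-row Lehmer-code reconstruction of the Stirling permutation from these counts. Your packaging of the key step as a standalone bijection $\tau\mapsto(r_1(\tau),\dots,r_n(\tau))$ onto $\prod_i\{0,\dots,k(n-i)\}$, together with the explicit verification that $r_i(\pi)\ge\lambda_i$ for all $i$ forces $\INV_k(\pi)\supseteq\INV_k(\sigma)$, spells out the surjectivity onto the interval more carefully than the paper does, but it is the same argument.
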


\begin{proof}
  Let $\pi\in\kS_n(132)$ with $\pi\geq\sigma$.  Note that by
  Proposition~\ref{pisigmaincl}, the inversions of $\sigma$ are
  inversions of $\pi$.  Then we define a $k$-Hermite history as
  follows.  For $i\leq n$, the label of the up step in the $i$th row
  from the top is the number of $\times$'s in the matrix of $\sigma$
  that are in the $i$th row, and below $\mu$.  Since $\inv_k(\sigma)$
  is the number of $\times$'s above the path $\mu$ by definition of the
  bijection in Figure~\ref{fig:alpha_k}, the number $\inv_k(\pi) -
  \inv_k(\sigma)$ is the sum of weights in the Hermite history. So it
  remains only to show that these labels define a Hermite history
  (i.e. they fall in the right range) and that this is a bijection.

  Let us start with the first row. If the height of the up step in the
  first row is $h$, then there are $h+k$ cells to the right of the up
  step in the first row. Since there are $k$ consecutive dots in the
  first row in $\pi$, the $\times$'s are located in the cells after
  the up step and before the $k$ consecutive dots. Thus the number of
  $\times$'s in the first row is among $0,1,2,\dots,h$. Now consider
  the second row.  If the height of the up step in the first row is
  $h'$, then there are $h'+2k$ cells to the right of the up step in
  the second row. By the condition for $k$-Stirling permtutation, if
  we delete the columns which have a dot in the first row, then the
  dots in the second row are consecutive, and the $\times$'s are
  located in the cells after the up step and before the $k$
  consecutive dots. Thus the number of $\times$'s in the second row is
  among $0,1,2,\dots,h'$. In this manner, we can see that the number
  of $\times$'s in each row is at most the height of the up step in
  the same row. This argument also shows that once the number of
  $\times$'s in each row is determined, we can uniquely construct the
  corresponding permutation $\pi$ by putting dots starting from the
  first row. This implies that the map is a desired bijection.
\end{proof}

We now have a generalization of
Proposition~\ref{fixedupperpathbruhat}. 
It is a rewriting of Theorem~\ref{thmmuqint} using the bijection from Proposition \ref{bijinthist}.

\begin{thm}
If $\mu$ is a $k$-Dyck path corresponding to $\sigma\in
\kS_n(132)$, then
\[
\sum_{D\in \kD_n(*/\mu)} q^{\tiles(D)} = 
\sum_{\pi\ge\sigma} q^{\inv_k(\pi)-\inv_k(\sigma)}.
\]
\end{thm}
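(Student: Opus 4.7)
The plan is to chain together the three results already established in this section. First, Theorem~\ref{thmmuqint} gives
\[
\sum_{D\in \kD_n(*/\mu)} q^{\tiles(D)} = \prod_{u\in \UP(\mu)} \qint{\HT(u)}.
\]
The next step is to rewrite the right-hand side as a generating function over $k$-Hermite histories. By the very definition of a $k$-Hermite history, each up step $u$ of $\mu$ carries an independent label in $\{0,1,\dots,\HT(u)-1\}$, so expanding each factor $[\HT(u)]_q = 1+q+\cdots+q^{\HT(u)-1}$ and interpreting the choice of exponent at $u$ as the label yields
\[
\prod_{u\in \UP(\mu)} \qint{\HT(u)} = \sum_{H\in\kH(\mu)} q^{\wt(H)}.
\]

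Finally, I would invoke Proposition~\ref{bijinthist}: since $\mu=\alpha(\sigma)$, there is a bijection between the interval $\{\pi\in\kS_n : \pi \geq \sigma\}$ and $\kH(\mu)$ which transports $\inv_k(\pi)-\inv_k(\sigma)$ to $\wt(H)$. Summing $q$ to each side of this bijective correspondence gives
\[
\sum_{\pi\ge\sigma} q^{\inv_k(\pi)-\inv_k(\sigma)} = \sum_{H\in\kH(\mu)} q^{\wt(H)},
\]
and combining the three displayed identities yields the theorem.

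There is no real obstacle here, since the theorem is explicitly presented as a reformulation of Theorem~\ref{thmmuqint} under the bijection of Proposition~\ref{bijinthist}; the only thing to check is that the statistics match, which is exactly the content of Proposition~\ref{bijinthist}. The proof is therefore just a one-line composition of previously proved bijections and identities, and can be presented in a couple of sentences.
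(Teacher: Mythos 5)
Your proposal is correct and matches the paper's own argument: the paper states explicitly that this theorem ``is a rewriting of Theorem~\ref{thmmuqint} using the bijection from Proposition~\ref{bijinthist},'' which is precisely the chain of identifications you spell out (product formula $=$ generating function of $k$-Hermite histories $=$ generating function of the interval). The only difference is that you make the intermediate step through $\sum_{H\in\kH(\mu)} q^{\wt(H)}$ explicit, which the paper leaves implicit.
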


\section{\texorpdfstring{$k$-regular noncrossing partitions}{k-regular noncrossing partitions}}

In this section, we take another point of view on the $k$-Stirling
permutations studied in the previous section.

\begin{defn}
We denote by $\NC_n^{(k)}$ the set of $k$-{\it regular noncrossing partitions} of size $n$, i.e. set partitions 
of $[kn]$ such that each block contains $k$ elements ($k$-regular), and there are no
integers $a<b<c<d$ such that $a$, $c$ are in one block, and $b$, $d$ in another block (noncrossing).
To each $k$-regular noncrossing partition $\pi$ of $[kn]$, we define its {\it nesting poset} $\Nest(\pi)$
as follows: the elements of the poset are the blocks of $\pi$, and $x\leq y$ in the poset when 
the block $x$ lies between two elements of the block $y$.
\end{defn}

There is a natural way to consider a $k$-Stirling permutation as a
linear extension of the nesting poset of a $k$-regular noncrossing
partition.

\begin{prop} \label{bijNC}
There is a bijection between $\mathfrak{S}_n^{(k)}$ and pairs $(\pi,E)$ where 
$\pi \in \NC_n^{(k)}$ and $E$ is a linear extension of the poset $\Nest(\pi)$.
\end{prop}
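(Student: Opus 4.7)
The plan is to construct the bijection $\Phi$ explicitly by extracting the partition from equal-value classes. Given $\sigma \in \kS_n$, define $\pi_\sigma$ to be the set partition of $[kn]$ whose blocks are the fibers $\sigma^{-1}(i)$ for $i \in [n]$, and let $E_\sigma$ be the bijection from the blocks of $\pi_\sigma$ to $[n]$ that sends each block to its common value. The partition is automatically $k$-regular with $n$ blocks, since each value in $[n]$ appears exactly $k$ times in $\sigma$. Conversely, from a pair $(\pi, E)$ one defines a word $\sigma$ by $\sigma_t = E(B)$ where $B$ is the block of $\pi$ containing $t$; this is manifestly a permutation of the multiset $\{1^k, 2^k, \ldots, n^k\}$.

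First I would check that $\pi_\sigma$ is noncrossing. Suppose two blocks crossed, i.e.\ there were positions $a<b<c<d$ with $a,c$ in the block of value $i$ and $b,d$ in the block of value $j\ne i$. Applying the $k$-Stirling condition to the pair $(a,c)$ with $\sigma_b = j$ in between forces $i>j$, while applying it to $(b,d)$ with $\sigma_c = i$ in between forces $j>i$, a contradiction. Next I would verify that $E_\sigma$ is a linear extension of $\Nest(\pi_\sigma)$: if a block $x$ of value $i$ lies between two elements of a block $y$ of value $j$, then $i$ occurs between two occurrences of $j$ in $\sigma$, and the Stirling condition gives $j > i$, so $E_\sigma(x) < E_\sigma(y)$, as required.

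For the inverse direction, let $\sigma = \Psi(\pi, E)$ and suppose $\sigma_c = j$ lies between two positions with $\sigma$-value $i$. Then the position $c$, belonging to block $E^{-1}(j)$, lies between two elements of the block $E^{-1}(i)$; by noncrossingness of $\pi$ the entire block $E^{-1}(j)$ must be nested inside $E^{-1}(i)$, so $E^{-1}(j) <_{\Nest} E^{-1}(i)$, and the linear-extension hypothesis forces $j<i$. The two maps $\Phi$ and $\Psi$ are mutually inverse by direct inspection of the constructions.

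I do not foresee a serious obstacle here: the entire argument reduces to unpacking the definitions of $k$-Stirling permutation, $k$-regular noncrossing partition, and nesting poset, and invoking the Stirling condition in each direction. The one point worth being careful about is the observation underlying all the verifications, namely that in a noncrossing partition ``some element of a block $x$ lies between two elements of $y$'' is automatically equivalent to ``every element of $x$ lies between those two elements of $y$''; this is precisely what allows the local Stirling condition to encode the global nesting structure and the linear-extension property.
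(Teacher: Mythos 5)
Your proposal is correct and follows essentially the same route as the paper: blocks are the fibers $\sigma^{-1}(i)$, the linear extension records the common value of each block, and the inverse reads the block label at each position. You simply spell out the verifications (noncrossingness, the linear-extension property, and the Stirling condition for the inverse) that the paper leaves as immediate consequences of the definitions.
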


\begin{proof}
Let $\sigma\in \mathfrak{S}_n^{(k)}$. We define $\pi$ by saying that 
$i$ and $j$ are in the same block if $\sigma_i=\sigma_j$, and $E$ is defined
by saying that the label of a block of $\pi$ is $\sigma_i$ where $i$ is any element 
of the block. We can see that $\pi$ is noncrossing from the definition of Stirling
permutations, and $E$ is a linear extension by definition of the nesting poset.
In the exemple $\sigma=422243111334$, we get the noncrossing partition
in Figure~\ref{exbijNC} where the labels define the linear extension of the nesting poset.
The inverse bijection is simple to describe: $\sigma_i$ is equal to $j$ if $i$ is in 
the block with label $j$.
\end{proof}

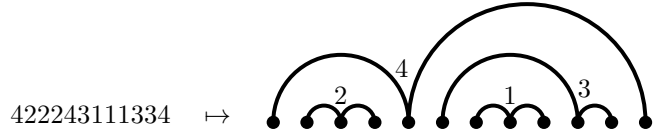
\begin{figure}\psset{unit=4.5mm}
422243111334 \quad
$\mapsto$
\begin{pspicture}(12,3.5)
   \psdots(1,0)(2,0)(3,0)(4,0)(5,0)(6,0)(7,0)(8,0)(9,0)(10,0)(11,0)(12,0)
   \psarc(2.5,0){0.5}{0}{180}\psarc(3.5,0){0.5}{0}{180}
   \psarc(7.5,0){0.5}{0}{180}\psarc(8.5,0){0.5}{0}{180}
   \psarc(8  ,0){2  }{0}{180}\psarc(10.5,0){0.5}{0}{180}
   \psarc(3  ,0){2  }{0}{180}\psarc(8.5,0){3.5}{0}{180}
   \rput(8,0.8){1}
   \rput(3,0.8){2}
   \rput(10.2,1){3}
   \rput(4.8,1.6){4}
   \end{pspicture}
 \caption{The bijection proving Proposition~\ref{bijNC}.\label{exbijNC}}
\end{figure}

The poset $\Nest(\pi)$ is always a forest, so it is possible to consider pairs $(\pi,E)$ as 
a decreasing forest. Each block of $\pi$ is a vertex of the forest, and the forest structure
is the order $\Nest(\pi)$. Also, the labelling $E$ naturally gives the decreasing labelling 
of the forest.

Let $b$ be a block of $\pi$, the elements of $b$ begin $i_1<\dots<i_k$. Then the descendants
of $b$ in the forest can be distinguished into $k-1$ categories, depending on the index $j$ such 
that a descendant of $b$ lies between $i_j$ and $i_{j+1}$.

So we arrive at the following definition.

\begin{defn}
Let $T_n^{(k-1)}$ denote the set of $(k-1)$-ary plane forests on $n$ vertices defined by the following conditions:
\begin{itemize}
 \item the descendants of a vertex have a structure of a $(k-1)$-uple of ordered lists,
 \item the vertices are labeled with integers from $1$ to $n$ and the labels are decreasing from
       the roots to the leaves.
\end{itemize}

\end{defn}

As a result of the above discussion, we obtain that there is a bijection between $\kS_n$ and $T_n^{(k-1)}$.
However we do not insist on this point of view, since the definition of the trees in $T_n^{(k-1)}$ is not 
particularly natural.

There is a hook length formula for the number of linear extensions of a forest \cite{Bjorner1991}. 
If $x\in\Nest(\pi)$, let $h_x$ denote the number of elements below $x$ in the 
nesting poset, then the number of linear extensions of $\pi\in \NC_n^{(k)}$ is
\[
  \frac{n!}{\prod_{x\in \Nest(\pi)} h_x }.
\]
This gives the following formula for the number of $k$-Stirling permutations.

\begin{prop}[Multifactorial hook length formula] 
\[
1(k+1)(2k+1)\dots((n-1)k+1) = \sum_{ \pi \in \NC_n^{(k)} }  \frac{ n!  }{  \prod\limits_{ x \in \Nest(\pi) } h_x  }.
\] 
\end{prop}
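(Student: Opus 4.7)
The plan is to combine two facts already established or cited in the paper. First, setting $q=1$ in Proposition~\ref{invprod} gives
\[
|\kS_n| = (k+1)(2k+1)\cdots((n-1)k+1),
\]
which, up to the harmless factor of $1$ at the front, is the left-hand side of the identity.

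Second, I would count $\kS_n$ in a different way using Proposition~\ref{bijNC}, which provides a bijection between $\kS_n$ and pairs $(\pi, E)$ with $\pi \in \NC_n^{(k)}$ and $E$ a linear extension of the nesting poset $\Nest(\pi)$. Therefore
\[
|\kS_n| = \sum_{\pi \in \NC_n^{(k)}} e(\Nest(\pi)),
\]
where $e(P)$ denotes the number of linear extensions of a poset $P$. Now the paper observes that $\Nest(\pi)$ is a forest, so the hook length formula for forests (recalled from \cite{Bjorner1991} in the paragraph immediately preceding the statement) applies and gives
\[
e(\Nest(\pi)) = \frac{n!}{\prod_{x \in \Nest(\pi)} h_x}.
\]

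Substituting this into the sum yields the stated identity. There is no real obstacle here; the only thing to be mildly careful about is matching conventions, in particular verifying that the hook $h_x$ used in the hook length formula from \cite{Bjorner1991} agrees with the definition given in the paragraph before the proposition (``the number of elements below $x$ in the nesting poset'', which in the forest is the size of the subtree rooted at $x$, the standard forest hook length). Once this matching is in place, the proof is just the chain of equalities above.
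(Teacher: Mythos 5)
Your proposal is correct and follows essentially the same route as the paper: the paper derives this proposition exactly by combining the count of $\kS_n$ (Proposition~\ref{invprod} at $q=1$), the bijection of Proposition~\ref{bijNC} with pairs $(\pi,E)$, and the hook length formula for linear extensions of forests cited from \cite{Bjorner1991} in the preceding paragraph. Your remark about matching the convention for $h_x$ (the size of the subtree of $x$ in the forest $\Nest(\pi)$) is the only point of care, and it is handled the same way in the paper.
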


In the case $k>2$, it is not clear how to follow the $q$-statistic, but for $k=2$ we can use the bijection
between noncrossing matchings and plane forests (which is just $\pi\mapsto \Nest(\pi)$), and use the 
$q$-hook length formula from \cite{Bjorner1989}.
We get a hook length formula for $[1]_q[3]_q \dots [2n-1]_q$. 

\begin{prop}[$q$-double factorial hook length formula] 
We have
\[
  %
  [1]_q[3]_q \dots [2n-1]_q = \sum_{F} [n]_{q^2}! \prod_{v \in F} \frac{ q^{h_v-1}  }{ [h_v]_{q^2} },
\] 
where the sum is over all  plane forests $F$ with $n$ vertices.
\end{prop}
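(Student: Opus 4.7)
The plan is to combine three ingredients: Proposition~\ref{invprod} to evaluate the left-hand side, Proposition~\ref{bijNC} to express $\mathfrak{S}_n^{(2)}$ as pairs (noncrossing matching, linear extension), and the $q$-hook length formula of Björner–Wachs to handle the sum over linear extensions. By Proposition~\ref{invprod} with $k=2$, and noting $[1]_q=1$, the left-hand side equals $\sum_{\pi\in\mathfrak{S}_n^{(2)}}q^{\inv_2(\pi)}$. By Proposition~\ref{bijNC}, each such $\pi$ corresponds to a pair $(M,E)$ where $M$ is a noncrossing matching of $[2n]$ and $E$ is a linear extension of $\Nest(M)$, and since $M\mapsto F=\Nest(M)$ is a bijection onto plane forests on $n$ vertices, I may reindex the sum by $(F,E)$.

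The technical heart of the proof is to decompose $\inv_2(\pi)$ as a function of $(F,E)$. For each ordered pair of distinct blocks $(b,b')$ of $M$ with $E(b')>E(b)$, I count the contribution to $\inv_2(\pi)$ from row $E(b)$ and the positions of $b'$: this is the number of positions $j\in\{\ell_{b'},r_{b'}\}$ with $j<\ell_b$, where $\ell_c<r_c$ denote the two positions of a block $c$. A short case analysis on the four possible relative configurations of $b$ and $b'$ (disjoint with $b'$ left of $b$; disjoint with $b'$ right of $b$; $b'$ nested inside $b$; $b$ nested inside $b'$) shows that the contribution is $2$ when $b'$ lies strictly to the left of $b$, is $1$ when $b$ is strictly nested inside $b'$ (the condition $E(b')>E(b)$ being automatic here, since $E$ respects $\Nest(M)$), and $0$ otherwise. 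Summing yields
\[
\inv_2(\pi) \;=\; 2\,\inv_L(M,E) \;+\; \sum_{v\in F}(h_v-1),
\]
where $\inv_L(M,E)$ counts incomparable pairs of blocks $(b,b')$ with $b'$ to the left of $b$ in planar order and $E(b')>E(b)$, while the second sum equals the total number of ancestor-descendant pairs in $F$ and depends only on $F$.

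For fixed $F$, applying the $q$-hook length formula of Björner–Wachs \cite{Bjorner1989}, with $q$ replaced by $q^2$, gives
\[
\sum_{E} q^{2\,\inv_L(M,E)} \;=\; \frac{[n]_{q^2}!}{\prod_{v\in F}[h_v]_{q^2}}.
\]
Multiplying by $q^{\sum_v(h_v-1)}$ and summing over $F$ then produces the right-hand side of the proposition.

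The main obstacle will be confirming that $\inv_L$ (on our decreasing linear extensions, in which roots carry the largest labels) has the same generating function as the standard inversion statistic appearing in Björner–Wachs, since the conventions differ; one can either pass to the reversed labeling $\widetilde L=n+1-E$ and read off the classical statistic, or exhibit a simple symmetry between the two natural inversion statistics on linear extensions of a plane forest and check on small cases. The clean splitting of $\inv_2$ into a forest-only term plus twice a linear-extension statistic is specific to $k=2$, which fits the authors' remark that the $q$-statistic is not obviously tractable for general $k$.
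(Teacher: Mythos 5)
Your proof is correct and follows essentially the same route as the paper's: both reduce the left-hand side to $\sum_{\pi\in\mathfrak{S}_n^{(2)}}q^{\inv_2(\pi)}$, split $\inv_2$ via the nested/left-disjoint/right-disjoint case analysis into twice an incomparable-pair inversion statistic plus the comparable-pair count $\sum_v(h_v-1)$, and then invoke the Bj\"orner--Wachs $q$-hook length formula with $q$ replaced by $q^2$. Your explicit flagging of the increasing-versus-decreasing labeling convention (resolved by complementing labels and using the left-right reflection symmetry of the forest) is a detail the paper's proof passes over silently, but it does not change the argument.
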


\begin{proof}
We know that the left hand side is the inversion generating function of $\mathfrak{S}_n^{(2)}$.
It remains to understand what becomes the number of inversion through the bijection which 
send a 2-Stirling permutation to a noncrossing matching with labeled blocks, or equivalently
increasing plane forests.

To this end, we distinguish two kinds of inversions. Let $i<j$ and $\sigma\in\mathfrak{S}_n^{(2)}$.
If the four letters $i,i,j,j$ appear in $\sigma$ in this order, there is no inversion.
If they appear in the order $j,i,i,j$, there is one inversion, and this corresponds to 
the case where the vertex with label $i$ is below the vertex with label $j$.
It means we have to count one inversion for each pair of comparable vertices, and
the number of comparable vertices is clearly
$\sum_{ x \in \pi } (h_x-1)$. In particular, it does not depend on the labeling.
If they appear in the order $j,j,i,i$, there are two inversions, and this situation corresponds
to the case where the vertices with labels $i,j$ form an inversion in the forest.

So for a particular forest $F$, we get the term
\[
  [n]_{q^2}! \prod_{v \in F} \frac{ q^{h_v-1}  }{ [h_v]_{q^2} }.
\]
This completes the proof.
\end{proof}

\section{Symmetric Dyck tilings and marked increasing forests}

A \emph{symmetric plane forest} is a plane forest which is invariant
under the reflection about a line, called the \emph{center line}. A
\emph{center vertex} is a vertex on the center line.  The
\emph{left part} of a symmetric plane forest is the subforest
consisting of vertices on or to the left of the center line.  The
\emph{size} of a symmetric plane forest is the number of vertices in
the left part of it.
 
Let $F$ be a symmetric plane forest of size $n$.  A \emph{marked
  increasing labeling} of $F$ is a labeling of the left part of $F$
with $[n]$ such that the labels are increasing from roots to leaves,
each integer appears exactly once, and each non-center vertex may be
marked with $*$. See Figure~\ref{fig:sym_tree} for an example of a
marked increasing labeling. We denote the set of marked increasing
labelings of $F$ by $\INC^*(F)$.

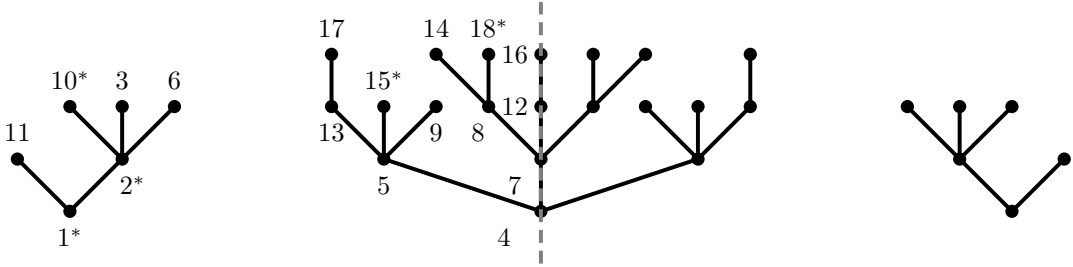
\begin{figure} \centering
\psset{unit=2.2}
\begin{pspicture}(-10,-2)(10,3)

\psdots(1,2)(2,1)(3,0)(3,1)(4,1)(4,2)(1,1)(2,2)
\psdots(-1,2)(-2,1)(-3,0)(-3,1)(-4,1)(-4,2)(-1,1)(-2,2)

\psdots(0,-1)(0,0)(0,1)(0,2)
\psline(0,-1)(0,0)
\psline(0,1)(0,0)
\psline(0,1)(0,2)

\psline(0,0)(1,1)
\psline(0,-1)(3,0)

\psline(1,2)(1,1)
\psline(1,1)(2,2)
\psline (3,0)(2,1)
\psline (3,0)(3,1)
\psline (3,0)(4,1)
\psline (4,1)(4,2)

\psline(0,0)(-1,1)
\psline(-1,2)(-1,1)
\psline(-1,1)(-2,2)
\psline (-3,0)(-2,1)
\psline (-3,0)(-3,1)
\psline (-3,0)(-4,1)
\psline (-4,1)(-4,2)
\psline(0,-1)(-3,0)

\rput(-1,2.5){$18^*$}
\rput(-2,2.5){$14$}
\rput(-2,0.5){$9$}
\rput(-1.2,0.5){$8$}
\rput(-3,-0.5){$5$}
\rput(-3,1.5){$15^*$}
\rput(-4,.5){$13$}
\rput(-4,2.5){$17$}

\rput(0,-1){
\psdots(7,2)(8,1)(8,2)(9,0)(9,2)(10,1)
\psline(7,2)(8,1)
\psline(8,1)(8,2)
\psline(8,1)(9,2)
\psline(8,1)(9,0)
\psline(9,0)(10,1)
}

\rput(-.7,-1.5){$4$}
\rput(-.5,-.5){$7$}
\rput(-.5,1){$12$}
\rput(-.5,2){$16$}

\rput(0,-1){
\psdots(-7,2)(-8,1)(-8,2)(-9,0)(-9,2)(-10,1)
\psline(-7,2)(-8,1)
\psline(-8,1)(-8,2)
\psline(-8,1)(-9,2)
\psline(-8,1)(-9,0)
\psline(-9,0)(-10,1)
\rput(-7,2.5){6}
\rput(-7.8,.5){$2^*$}
\rput(-8,2.5){$3$}
\rput(-9,-0.5){$1^*$}
\rput(-9,2.5){$10^*$}
\rput(-10,1.5){11}}

\psline[linecolor=gray, linestyle=dashed](0,-2)(0,3)
\end{pspicture}
\caption{A symmetric plane tree of size $18$ and a marked increasing
  labeling of it. The center line is the dashed line.}
\label{fig:sym_tree}
\end{figure}

Let $L\in \INC^*(F)$.  We denote by $\MARK(L)$ the set of labels of
the marked vertices in $L$. An \emph{inversion} of $L$ is a pair of
vertices $(u,v)$ such that $L(u)>L(v)$, $u$ and $v$ are incomparable,
and $u$ is to the left of $v$. Let $\INV(L)$ denote the set of
inversions of $L$.

A Dyck path $\lambda$ of length $2n$ is called \emph{symmetric} if it
is invariant under the reflection along the line $x+y= n$. For two
symmetric Dyck paths $\lambda$ and $\mu$ of length $2n$, a Dyck tiling
of $\lm$ is called \emph{symmetric} if it is invariant under the
reflection along the line $x+y= n$. See Figure~\ref{fig:sym_DT} for an
example of symmetric Dyck tiling. We denote by $\D_\sym(\lm)$ the set
of symmetric Dyck tilings of shape $\lm$.

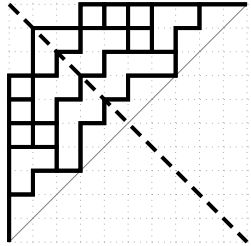
\begin{figure}
  \centering
\begin{pspicture}(0,0)(10,10)
\kdyckgrid{10}{10}
\psline(0,0)(0, 1)(0, 2)(0, 3)(0, 4)(0,
  5)(0, 6)(0, 7)(1, 7)(1, 8)(1, 9)(2, 9)(3, 9)(3, 10)(4, 10)(5, 10)(6,
  10)(7, 10)(8, 10)(9, 10)(10, 10)\psline(0,0)(0, 1)(0, 2)(1, 2)(1,
  3)(2, 3)(3, 3)(3, 4)(3, 5)(4, 5)(4, 6)(5, 6)(5, 7)(6, 7)(7, 7)(7,
  8)(7, 9)(8, 9)(8, 10)(9, 10)(10, 10)\pspolygon(3, 3)(2, 3)(2, 4)(2,
  5)(2, 6)(3, 6)(3, 7)(4, 7)(4, 8)(5, 8)(6, 8)(7, 8)(7, 7)(6, 7)(5,
  7)(5, 6)(4, 6)(4, 5)(3, 5)(3, 4)(3, 3)\pspolygon(7, 8)(6, 8)(6,
  9)(6, 10)(7, 10)(8, 10)(8, 9)(7, 9)(7, 8)\pspolygon(6, 8)(5, 8)(5,
  9)(6, 9)\pspolygon(6, 9)(5, 9)(5, 10)(6, 10)\pspolygon(1, 2)(0,
  2)(0, 3)(0, 4)(1, 4)(2, 4)(2, 3)(1, 3)(1, 2)\pspolygon(2, 4)(1,
  4)(1, 5)(2, 5)\pspolygon(2, 5)(1, 5)(1, 6)(1, 7)(2, 7)(2, 8)(3,
  8)(3, 9)(4, 9)(5, 9)(5, 8)(4, 8)(4, 7)(3, 7)(3, 6)(2, 6)(2,
  5)\pspolygon(5, 9)(4, 9)(4, 10)(5, 10)\pspolygon(4, 9)(3, 9)(3,
  10)(4, 10)\pspolygon(2, 7)(1, 7)(1, 8)(1, 9)(2, 9)(3, 9)(3, 8)(2,
  8)(2, 7)\pspolygon(1, 4)(0, 4)(0, 5)(1, 5)\pspolygon(1, 5)(0, 5)(0,
  6)(1, 6)\pspolygon(1, 6)(0, 6)(0, 7)(1, 7)
\psline[linestyle=dashed](0,10)(10,0)
\end{pspicture}
\caption{An example of symmetric Dyck tiling. It is symmetric with respect to
  the dashed line.}
\label{fig:sym_DT}
\end{figure}

For a symmetric Dyck tiling $D$, a \emph{positive tile} is a tile
which lies strictly to the left of the center line, and a \emph{zero
  tile} is a tile which intersects with the center line.  We denote by
$\tiles_+(D)$ and $\tiles_0(D)$ the number of positive tiles and zero
tiles, respectively. Note that the total number of tiles in $D$ is
$\tiles(D) = 2\cdot \tiles_+(D) + \tiles_0(D)$. We also define
$\area_+(D)$ and $\area_0(D)$ to be the total area of positive tiles
and zero tiles in $D$, respectively, and
\[
\art_+(D) = \frac{\area_+(D)+\tiles_+(D)}2,
\qquad \art_0(D) = \frac{\area_0(D)+\tiles_0(D)}2.
\]
For example, if $D$ is the symmetric Dyck tiling in
Figure~\ref{fig:sym_DT}, we have
$\tiles_+(D) = 5$, $\tiles_0(D) = 3$, $\area_+(D)=7$, $\area_0(D)=19$,
$\art_+(D)=(7+5)/2=6$, and $\art_0(D)=(19+3)/2=11$.

\begin{figure}
  \centering
\begin{pspicture}(-5,0)(5,2)
\psdots(-3,1)(3,1)
\rput(-4,1){$1^*$}
\psline[linecolor=gray, linestyle=dashed](0,0)(0,2)
\end{pspicture} \qquad $\leftrightarrow$ \qquad
\begin{pspicture}(0,0)(10,3)
\kdyckgrid{2}{2}
\psline(0,0)(0, 1)(0, 2)(1, 2)(2, 2) \psline(0,0)(0, 1)(1, 1)(1, 2)(2, 2)
\pspolygon[fillstyle=solid,fillcolor=lightgray](1, 1)(0, 1)(0, 2)(1, 2)
\psline[linestyle=dashed](0,2)(2,0)
\end{pspicture}

\begin{pspicture}(-5,0)(5,2)
\psdots(-3,1)(3,1)(0,1)
\rput(-4,1){$1^*$}
\rput(-1,1){$2$}
\psline[linecolor=gray, linestyle=dashed](0,0)(0,2)
\end{pspicture} \qquad $\leftrightarrow$ \qquad
\begin{pspicture}(0,0)(10,4)
\kdyckgrid{3}{3}\psline[linestyle=dashed](0,3)(3,0)
\psline(0,0)(0, 1)(0, 2)(0, 3)(1, 3)(2,
  3)(3, 3)\psline(0,0)(0, 1)(1, 1)(1, 2)(2, 2)(2, 3)(3,
  3)\pspolygon(1, 1)(0, 1)(0, 2)(0, 3)(1, 3)(2, 3)(2, 2)(1, 2)(1,
  1)\end{pspicture}

\begin{pspicture}(-5,0)(5,4)
\psdots(-3,1)(3,1)(0,1)(-3,3)(3,3)
\psline(-3,1)(-3,3)
\psline(3,1)(3,3)
\rput(-4,1){$1^*$}
\rput(-4,3){$3^*$}
\rput(-1,1){$2$}
\psline[linecolor=gray, linestyle=dashed](0,0)(0,4)
\end{pspicture} \qquad $\leftrightarrow$ \qquad
\begin{pspicture}(0,0)(10,6)
\kdyckgrid{5}{5}
\psline(0,0)(0, 1)(0, 2)(0, 3)(0, 4)(0,
  5)(1, 5)(2, 5)(3, 5)(4, 5)(5, 5)\psline(0,0)(0, 1)(0, 2)(1, 2)(2,
  2)(2, 3)(3, 3)(3, 4)(3, 5)(4, 5)(5, 5)\pspolygon(2, 2)(1, 2)(1,
  3)(1, 4)(2, 4)(3, 4)(3, 3)(2, 3)(2, 2)\pspolygon(3, 4)(2, 4)(2,
  5)(3, 5)\pspolygon(2, 4)(1, 4)(1, 5)(2, 5)\pspolygon(1, 2)(0, 2)(0,
  3)(1, 3)\pspolygon(1, 3)(0, 3)(0, 4)(1, 4)\pspolygon(1, 4)(0, 4)(0,
  5)(1, 5)
\psset{fillstyle=solid,fillcolor=lightgray}
\psframe(0,2)(1,3)
\psframe(0,3)(1,4)
\psframe(0,4)(1,5)
\psframe(1,4)(2,5)
\psframe(2,4)(3,5)
\psline[linestyle=dashed](0,5)(5,0)
\end{pspicture} 

\begin{pspicture}(-5,0)(5,4)
\psdots(-3,1)(3,1)(0,1)(-3,3)(3,3)(-1,3)(1,3)
\psline(-3,1)(-3,3)
\psline(3,1)(3,3)
\psline(0,1)(-1,3)
\psline(0,1)(1,3)
\rput(-4,1){$1^*$}
\rput(-4,3){$3^*$}
\rput(-1,4){$4$}
\rput(-1,1){$2$}
\psline[linecolor=gray, linestyle=dashed](0,0)(0,4)
\end{pspicture} \qquad $\leftrightarrow$ \qquad
\begin{pspicture}(0,0)(10,8)
\kdyckgrid{7}{7}
\psline(0,0)(0, 1)(0, 2)(0, 3)(0, 4)(0,
  5)(0, 6)(1, 6)(1, 7)(2, 7)(3, 7)(4, 7)(5, 7)(6, 7)(7, 7)
\psline(0,0)(0, 1)(0, 2)(1, 2)(2, 2)(2, 3)(2, 4)(3, 4)(3, 5)(4,
  5)(5, 5)(5, 6)(5, 7)(6, 7)(7, 7)\pspolygon(2, 2)(1, 2)(1,
  3)(1, 4)(1, 5)(2, 5)(2, 6)(3, 6)(4, 6)(5, 6)(5, 5)(4, 5)(3, 5)(3,
  4)(2, 4)(2, 3)(2, 2)\pspolygon(5, 6)(4, 6)(4, 7)(5, 7)\pspolygon(4,
  6)(3, 6)(3, 7)(4, 7)\pspolygon(1, 2)(0, 2)(0, 3)(1, 3)\pspolygon(1,
  3)(0, 3)(0, 4)(1, 4)\pspolygon(1, 4)(0, 4)(0, 5)(0, 6)(1, 6)(1,
  7)(2, 7)(3, 7)(3, 6)(2, 6)(2, 5)(1, 5)(1, 4)
\psline[linestyle=dashed](0,7)(7,0)
\end{pspicture} 
 
\begin{pspicture}(-5,0)(5,4)
\psdots(-3,1)(3,1)(0,1)(-4,3)(4,3)(-1,3)(1,3)(2,3)(-2,3)
\psline(-3,1)(-4,3)
\psline(3,1)(4,3)
\psline(0,1)(-1,3)
\psline(0,1)(1,3)
\psline(-3,1)(-2,3)
\psline(3,1)(2,3)
\rput(-4,1){$1^*$}
\rput(-4,4){$3^*$}
\rput(-2,4){$5$}
\rput(-1,4){$4$}
\rput(-1,1){$2$}
\psline[linecolor=gray, linestyle=dashed](0,0)(0,4)
\end{pspicture} \qquad $\leftrightarrow$ \qquad
\begin{pspicture}(0,0)(10,10) \kdyckgrid{9}{9} \psline[linestyle=dashed](0,9)(9,0)
\psline(0,0)(0, 1)(0, 2)(0, 3)(0, 4)(0,
  5)(0, 6)(0, 7)(1, 7)(2, 7)(2, 8)(2, 9)(3, 9)(4, 9)(5, 9)(6, 9)(7,
  9)(8, 9)(9, 9)\psline(0,0)(0, 1)(0, 2)(1, 2)(1, 3)(2, 3)(3, 3)(3,
  4)(3, 5)(4, 5)(4, 6)(5, 6)(6, 6)(6, 7)(6, 8)(7, 8)(7, 9)(8, 9)(9,
  9)\pspolygon(3, 3)(2, 3)(2, 4)(2, 5)(2, 6)(3, 6)(3, 7)(4, 7)(5,
  7)(6, 7)(6, 6)(5, 6)(4, 6)(4, 5)(3, 5)(3, 4)(3, 3)\pspolygon(6,
  7)(5, 7)(5, 8)(5, 9)(6, 9)(7, 9)(7, 8)(6, 8)(6, 7)\pspolygon(5,
  7)(4, 7)(4, 8)(5, 8)\pspolygon(5, 8)(4, 8)(4, 9)(5, 9)\pspolygon(1,
  2)(0, 2)(0, 3)(0, 4)(1, 4)(2, 4)(2, 3)(1, 3)(1, 2)\pspolygon(2,
  4)(1, 4)(1, 5)(2, 5)\pspolygon(2, 5)(1, 5)(1, 6)(1, 7)(2, 7)(2,
  8)(3, 8)(4, 8)(4, 7)(3, 7)(3, 6)(2, 6)(2, 5)\pspolygon(4, 8)(3,
  8)(3, 9)(4, 9)\pspolygon(3, 8)(2, 8)(2, 9)(3, 9)\pspolygon(1, 4)(0,
  4)(0, 5)(1, 5)\pspolygon(1, 5)(0, 5)(0, 6)(1, 6)\pspolygon(1, 6)(0,
  6)(0, 7)(1, 7)
\psset{fillstyle=solid,fillcolor=lightgray}
\psframe(0,4)(1,5)
\psframe(0,5)(1,6)
\psframe(0,6)(1,7)
\psframe(2,8)(3,9)
\psframe(3,8)(4,9)
\psframe(4,8)(5,9)
\end{pspicture} 
  \caption{An illustration of the bijection between marked increasing
    labeling of a symmetric forest and symmetric Dyck tilings with
    fixed lower path. The newly added squares in each step are colored
  gray.}
  \label{fig:sym_bij}
\end{figure}
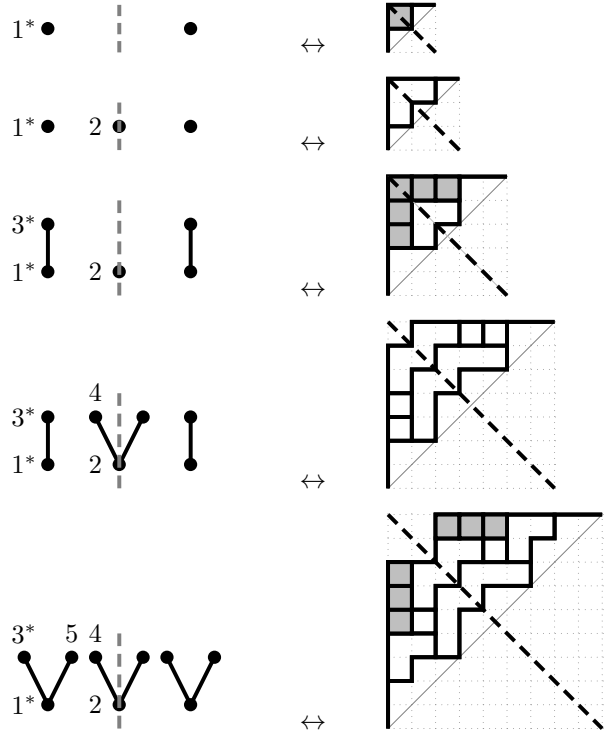

\begin{thm}
  Let $F$ be a symmetric plane forest of size $n$ and $\lambda$ the
  corresponding Dyck path. Then there is a
  bijection $\phi:\INC^*(F) \to \D_\sym(\lambda/*)$ such that if
  $\phi(L) = D$, then $\tiles_0(D) = |\MARK(L)|$ and 
\[
\art_+(D) + \art_0(D) = |\INV(L)| + \sum_{i\in\MARK(L)} (n+1-i).
\]
\end{thm}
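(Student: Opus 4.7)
The plan is to build $\phi$ by induction on $n$, using exactly the step-by-step construction suggested by Figure~\ref{fig:sym_bij}. The base case $n=0$ is trivial. For the inductive step, let $v$ be the vertex of $F$ carrying the largest label $n$ in $L$; since $L$ is increasing from roots to leaves, $v$ is a leaf of the left part. Let $F'$ be the symmetric plane forest obtained by removing $v$ (and its mirror, if $v$ is non-center), let $L' \in \INC^*(F')$ be the restriction of $L$, and let $\lambda'$ be the Dyck path for $F'$. By the inductive hypothesis we already have $D' := \phi(L') \in \D_\sym(\lambda'/\ast)$.

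I would then define $\phi(L) = D$ by one of three operations according to the type of $v$. (1) If $v$ is a center vertex (hence unmarked), then $\lambda$ differs from $\lambda'$ by a single up-down pair straddling the center line, and $D$ is obtained by inflating $D'$ along the anti-diagonal through the insertion point: tiles crossing that anti-diagonal are lengthened, but no new tile is created. (2) If $v$ is non-center and unmarked, then $\lambda$ differs from $\lambda'$ by two mirror-image up-down pairs, and $D$ is defined by the analogous symmetric inflation. (3) If $v$ is non-center and marked, perform the symmetric inflation of case (2) and additionally insert one new zero tile bridging the two newly inserted pairs across the center line. In each case $D$ is symmetric and cover-inclusive by construction, and $\phi^{-1}$ is obtained by locating the innermost up-down pair(s) of $\lambda$ in a given $D \in \D_\sym(\lambda/\ast)$ and undoing the corresponding operation.

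To check the statistic identities, compare both sides of each equation in the inductive step. The equality $\tiles_0(D) = |\MARK(L)|$ is immediate: only case (3) modifies either side, and exactly by one unit. For the second identity, observe that
\[
\sum_{i \in \MARK(L)}(n+1-i) - \sum_{i \in \MARK(L')}(n-i) = |\MARK(L')| + [v \text{ marked}],
\]
so the right-hand side grows by $|\MARK(L')| + [v \text{ marked}] + (|\INV(L)| - |\INV(L')|)$. The left-hand side grows by the same amount: each of the $|\MARK(L')|$ pre-existing zero tiles gains one unit of $\art_0$ under the inflation, the new zero tile from case (3) contributes one more unit, and the stretched or newly created positive tiles account for the remaining $|\INV(L)|-|\INV(L')|$ through a bijection between new inversions involving $v$ and the corresponding positive-tile adjustments.

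The main obstacle will be the bookkeeping of case (3): one must describe precisely where the bridging zero tile is placed, verify that cover-inclusiveness is preserved with its neighbors, and show that the change in the positive part of the statistic matches $|\INV(L)|-|\INV(L')|$. This last matching can be handled by adapting the~\cite{KMPW} bijection between Dyck tilings with fixed lower path and increasing labelings of the associated forest: applying it separately to each side of the center line, with the marks dictating how tiles cross, reduces the symmetric statement to two copies of the non-symmetric one together with a clean accounting of the zero tiles.
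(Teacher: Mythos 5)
Your overall strategy---constructing $\phi$ recursively by inserting the vertices in increasing order of label, generalizing the KMPW ``spread and add broken strips'' operation symmetrically, and adding a square on the center line when the inserted vertex is marked---is exactly the approach the paper takes; its own proof is only a two-sentence sketch of this same recursion, so on the level of the construction you and the paper agree.

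The gap is in the verification of the statistic identity, which is the real content of the theorem. The telescoping does reduce the claim to showing that each insertion increases $\art_+(D)+\art_0(D)$ by $\bigl(|\INV(L)|-|\INV(L')|\bigr)+|\MARK(L')|$ plus one if the new vertex is marked, but your proposed realization of the term $|\MARK(L')|$---that \emph{every} pre-existing zero tile gains exactly one unit of $\art_0$ at \emph{every} step---is false for this construction. The paper's own Figure~\ref{fig:sym_bij} already refutes it: at the step inserting $3^*$ as a child of the leftmost root, the zero tile created earlier by $1^*$ keeps its three cells and is merely translated by $(1,1)$; the missing unit is supplied instead by a surplus single-cell tile in the broken strip (the strip contributes one cell for each earlier vertex of the \emph{full symmetric} forest lying to the right of the insertion point, which is more than the number of new inversions of $L$). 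Conversely, at the step inserting $4$ (a non-center child of the center vertex), each pre-existing zero tile is cut by both the left insertion anti-diagonal and its mirror and grows by four cells, i.e.\ by two units of $\art_0$, not one. So the itemization ``old zero tiles $\mapsto |\MARK(L')|$, new zero tile $\mapsto 1$, positive tiles $\mapsto \Delta|\INV|$'' does not hold tile by tile, and the correct bookkeeping---how the two mirrored broken strips and the stretches jointly distribute the contributions of the vertices to the right of the insertion point, and what happens when they meet at the center line---is precisely where the work lies; your proof does not supply it. A secondary gap: defining $\phi^{-1}$ by ``locating the innermost up-down pair(s) of $\lambda$'' is not well posed, since $\lambda$ has many valleys and the one to peel off must be determined from the tiling itself, as in the inverse of the KMPW map.
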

\begin{proof}
  This is a generalization of a bijection in \cite{KMPW} constructed
  recursively. We ``spread'' and add ``broken strips'' to all up steps
  before the center line and to all down steps after the center
  line. If a vertex is marked, then we also add a square at the center
  line. See Figure~\ref{fig:sym_bij}.
\end{proof}

\begin{cor}
  Let $F$ be a symmetric plane forest of size $n$ with $k$ center
  vertices and $\lambda$ the corresponding Dyck path.  Then
\[
|\D_\sym(\lambda/*)| = 2^{n-k} \frac{n!}{\prod_{x\in F} h_x}.
\]
If $k=0$, we have
\[
\sum_{D\in\D_\sym(\lambda/*)} q^{\art_+(D) + \art_0(D)}t^{\tiles_0(D)} 
= (1+tq)(1+tq^2)\cdots(1+tq^n) \frac{[n]_q!}{\prod_{x\in F} [h_x]_q}.
\]
\end{cor}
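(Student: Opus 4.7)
My plan is to derive both formulas directly from the bijection $\phi$ established in the preceding theorem, combined with the (q-)hook length formula for plane forests.

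For the cardinality, the first step is to observe that $\phi$ identifies $\D_\sym(\lambda/*)$ with $\INC^*(F)$, so the problem reduces to counting marked increasing labelings. I would separate the two pieces of data: first choose an (unmarked) increasing labeling of the left part of $F$, and then independently choose, for each of the $n-k$ non-center vertices, whether to mark it. The classical hook length formula for forests (Bj\"orner \cite{Bjorner1989}) supplies the factor $n!/\prod_x h_x$ for the first choice, and the second choice contributes $2^{n-k}$, giving the claimed product.

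For the refined $q$-statement with $k=0$, I would first rewrite the generating function via $\phi$ as a sum over $L\in\INC^*(F)$, with $q$-exponent $|\INV(L)|+\sum_{i\in\MARK(L)}(n+1-i)$ and $t$-exponent $|\MARK(L)|$. Since $k=0$, every vertex of the left part is non-center, so a marked labeling is uniquely specified by an unmarked increasing labeling $L_0$ together with an arbitrary subset $M\subseteq[n]$ of marked labels. This decouples the sum as
\[
\Bigl(\sum_{L_0} q^{|\INV(L_0)|}\Bigr)\Bigl(\sum_{M\subseteq[n]}\prod_{i\in M} t q^{n+1-i}\Bigr)
= \frac{[n]_q!}{\prod_{x\in F}[h_x]_q}\prod_{j=1}^n(1+tq^j),
\]
where the first factor is evaluated using Bj\"orner's $q$-hook length formula \cite{Bjorner1989} and the second is simplified by reindexing $j=n+1-i$.

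There is no serious obstacle: once $\phi$ is in hand, the remainder is bookkeeping. The only subtlety worth checking is that the inversion statistic on increasing labelings appearing in Bj\"orner's $q$-hook length formula agrees with $|\INV(L)|$ as defined in the preceding section, which is immediate from the definitions. Taking $q=t=1$ in the second formula also reproduces the first formula in the case $k=0$, providing a useful consistency check.
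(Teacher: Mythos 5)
Your proposal is correct and is precisely the argument the paper intends: the corollary is stated without proof as an immediate consequence of the bijection $\phi$ from the preceding theorem, with the count of marked increasing labelings decoupling into the (q\nobreakdash-)hook length formula of Bj\"orner and Wachs for the labelings times the independent factor $2^{n-k}$ (respectively $\prod_{j=1}^n(1+tq^j)$ after reindexing $j=n+1-i$) for the markings. Your check that the paper's inversion statistic on marked increasing labelings agrees with the one in the $q$-hook length formula is the only point needing verification, and it holds as you say.
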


\section{Symmetric Dyck tilings and symmetric Hermite histories}

For a symmetric Dyck path $\mu$ of length $2n$, let $\mu^+$ denote the
subpath consisting of the first $n$ steps. Note that each up step is
matched with a unique down step in a Dyck path. An up step of $\mu^+$
is called \emph{matched} if the corresponding down step in $\mu$ lies
in $\mu^+$ and \emph{unmatched} otherwise. 

A \emph{symmetric Hermite history} is a symmetric Dyck path $\mu$ with
a labeling of the up steps of $\mu^+$ in such a way that every matched
up step of height $h$ has label $i\in\{0,1,\dots, h-1\}$ and the
labels $a_1,a_2,\dots,a_k$ of the unmatched up steps form an
involutive sequence. Here, a sequence is called \emph{involutive} if
it can be obtained by the following inductive way.
\begin{itemize}
\item The empty sequence is defined to be involutive.
\item The sequence (0) is the only involutive sequence of length 1.
\item For $k\ge2$, an involutive sequence of length $k$ is either the
  sequence obtained from an involutive sequence of length $k-1$ by
  adding 0 at the end or the sequence obtained from an involutive
  sequence of length $k-2$ by adding an integer $r$ at the end and
  inserting a $0$ before the last $r$ integers, including the newly
  added integer, for some $1\le r\le k-1$.
\end{itemize}
From the definition it is clear that the number $v_k$ of involutive
sequences of length $k$ satisfies the recurrence $v_k = v_{k-1}+(k-1)
v_{k-2}$ with initial conditions $v_0=v_1=1$. Thus $v_k$ is equal to
the number of involutions in $\Sym_n$. 
We denote by $\HH_\sym(\mu)$ be the set of symmetric Hermite histories
on $\mu$. For $H\in \HH_\sym(\mu)$, let $\norm H$ be the sum of labels
in $H$ and $\pos(H)$ the number of positive labels on unmatched up
steps.

\begin{figure}
  \centering
\begin{pspicture}(0,0)(10,10)
\kdyckgrid{10}{10}
\psline(0,0)(0, 1)(0, 2)(0, 3)(0, 4)(0,
  5)(0, 6)(0, 7)(1, 7)(1, 8)(1, 9)(2, 9)(3, 9)(3, 10)(4, 10)(5, 10)(6,
  10)(7, 10)(8, 10)(9, 10)(10, 10)\psline[linestyle=dashed](0,10)(10,0)
\rput[r](-.3,.5){\bf \blue 0}
\rput[r](-.3,1.5){\bf \blue 0}
\rput[r](-.3,2.5){\bf \blue 2}
\rput[r](-.3,3.5){\bf \blue 0}
\rput[r](-.3,4.5){\bf \blue 2}
\rput[r](-.3,5.5){\bf \blue 4}
\rput[r](-.3,6.5){1}
\rput[r](.7,7.5){\bf \blue 1}
\rput[r](.7,8.5){\bf \blue 0}
\end{pspicture}
\caption{An example of symmetric Hermite history. The labels of the
  unmatched up steps are colored blue. The sequence
  $(0,0,2,0,2,4,1,0)$ of the labels of unmatched up steps is an
  involutive sequence.}
\label{fig:sym_hh}
\end{figure}
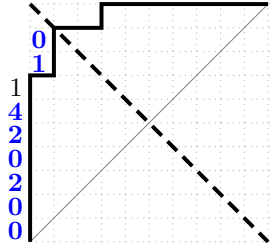

\begin{prop}
  There is a bijection $\psi:\HH_\sym(\mu)\to \D_\sym(*/\mu)$ such
  that if $\psi(H)=D$, then $\norm{H}=\tiles_+(D)+\tiles_0(D)$ and
  $\pos(H)=\tiles_0(D)$. Thus,
\[
\sum_{D\in\D_\sym(*/\mu)} q^{\tiles_+(D)+\tiles_0(D)} t^{\tiles_0(D)}
=\sum_{H\in\HH_\sym(\mu)} q^{\norm H} t^{\pos(H)}.
\]
\end{prop}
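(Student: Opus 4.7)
The plan is to construct $\psi$ (and its inverse) by adapting the tile-travelling bijection from Theorem~\ref{thmmuqint} (the case $k=1$) to respect the reflectional symmetry. Given $D \in \D_\sym(*/\mu)$, for each up step $u$ of $\mu^+$ I define the label of $u$ to be the number of tiles one traverses by starting at $u$ and following the entry/exit rule from the proof of Theorem~\ref{thmmuqint}, where a travel that enters a zero tile is stopped after counting that tile. For a matched up step, by symmetry the travel must remain entirely within the positive half, so the usual argument of Theorem~\ref{thmmuqint} (applied to the positive tiles alone) shows its label lies in $\{0,1,\ldots,h-1\}$ where $h$ is the height. For an unmatched up step, the travel may terminate before the center (contributing no zero tile) or may enter exactly one zero tile.

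The key step is to verify that the labels on the unmatched up steps form an involutive sequence. I would prove this by induction on the number $k$ of unmatched up steps, matching the recursion $v_k = v_{k-1} + (k-1)v_{k-2}$ geometrically. If the last unmatched up step of $\mu^+$ lies on no zero tile (label $0$), removing it (together with its symmetric partner) reduces to $k-1$ unmatched up steps and an involutive sequence of length $k-1$. Otherwise, the last unmatched up step enters a zero tile whose travel, by symmetry, exits exactly at the symmetric copy of one of the $k-1$ preceding unmatched up steps; this pairing encodes the choice of $r \in \{1,\ldots,k-1\}$ in the recursive clause, and peeling off this zero tile (with its pairing structure) reduces to an involutive sequence of length $k-2$.

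To build the inverse $\psi$, I proceed recursively as in the proof of Theorem~\ref{thmmuqint}: if some matched up step in $\mu^+$ has positive label followed by an eligible down step, swap them, decrement the label, and add a square on the left half together with its mirror on the right (preserving the symmetry). If no such reduction applies, I treat the last unmatched up step using the involutive sequence recursion: if its label is $0$, peel off the corresponding peak on the center line; if its label is $r > 0$, insert a symmetric zero tile joining it to the appropriate earlier unmatched up step, then continue. The involutive sequence condition is exactly what ensures that this recursion is always applicable and produces a valid cover-inclusive symmetric tiling.

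The main obstacle is verifying, at each step of the inductive construction, that the newly inserted zero tile is compatible with the cover-inclusive property and with the already-constructed symmetric tiling; this requires care because zero tiles interact with positive tiles near the center line. The statistic identities then follow by bookkeeping: each positive tile is traversed exactly once by the travel from a unique matched up step in $\mu^+$, and each zero tile is traversed exactly once by the travel from a unique unmatched up step (contributing a positive label there), which gives $\norm{H} = \tiles_+(D) + \tiles_0(D)$ and $\pos(H) = \tiles_0(D)$.
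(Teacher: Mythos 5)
Your overall strategy---read off a Hermite history via the tile-travelling rule of Theorem~\ref{thmmuqint} and keep only the labels of the up steps of $\mu^+$---is the same one the paper uses (its proof is essentially ``view $D$ as an ordinary Dyck tiling, take its Hermite history, restrict to the first half''). However, the specific truncation rule you impose and the bookkeeping you build the statistics on break down. Take $n=3$, $\mu=uuuddd$, $\lambda=ududud$, and the symmetric tiling $D$ consisting of the three single-cell tiles $A=[0,1]\times[1,2]$, $B=[0,1]\times[2,3]$, $C=[1,2]\times[2,3]$; this is cover-inclusive and symmetric, with $\tiles_+(D)=1$ and $\tiles_0(D)=1$, and all three up steps of $\mu^+$ are unmatched. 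Your travel gives the labels $(0,1,1)$: the second up step enters the positive tile $A$ and stops there, while the third enters the zero tile $B$ and is truncated. Then $\pos(H)=2\neq 1=\tiles_0(D)$, and $(0,1,1)$ is not an involutive sequence (the involutive sequences of length $3$ are $(0,0,0)$, $(0,1,0)$, $(0,0,1)$, $(0,0,2)$), so your map does not even land in $\HH_\sym(\mu)$. The image forced by the claimed statistics is $(0,0,2)$, the unique involutive sequence of length $3$ with sum $2$ and one positive entry.

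The root of the failure is the assertion in your final paragraph that each positive tile is traversed by the travel from a \emph{matched} up step: in the example above there are no matched up steps at all, yet there is a positive tile, traversed from an unmatched up step. Hence an unmatched up step can acquire a positive label without meeting any zero tile, which destroys $\pos(H)=\tiles_0(D)$, and your induction for the involutive-sequence property (peeling off the last unmatched step according to whether it meets a zero tile) has no control over such labels. A correct argument must redistribute the contributions of the positive tiles among the up steps differently from the raw (truncated or untruncated) travel---for instance by passing through the symmetric matching attached to $D$ and counting symmetric crossings, as in the last two propositions of the paper. A secondary issue: your claim that the travel from a matched up step stays in the positive half is justified only ``by symmetry,'' but the reflection across $x+y=n$ sends the left side of a tile's lowest cell to the top side of a cell, not to an entry or exit, so the travel chains are not preserved by the symmetry and this step also needs a real argument.
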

\begin{proof}
  Given a symmetric Dyck tiling $D$, considering $D$ as a normal Dyck
  tiling, we can obtain the Hermite history corresponding to $D$. By
  taking only the labels of up steps before the center line, we get a
  symmetric Hermite history. One can check that this gives a desired bijection.
\end{proof}

\begin{cor}\label{cor:sym}
  Let $\mu$ be a symmetric Dyck path such that $\mu^+$ has $k$
  unmatched up steps. Then
\[
\sum_{D\in\D_\sym(*/\mu)} q^{\tiles_+(D)+\tiles_0(D)} t^{\tiles_0(D)}
= f_k(q,t) \prod_{u\in\UP(\mu^+)} [\HT(u)]_q,
\]
where $\UP(\mu^+)$ is the set of matched up steps in $\mu^+$ and
$f_k(q,t)$ is defined by $f_0(q,t)=f_1(q,t)=1$ and
$f_k(q,t)=f_{k-1}(q,t)+tq[k-1]_q f_{k-2}(q,t)$ for $k\ge2$.
\end{cor}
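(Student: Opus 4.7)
The plan is to combine the previous proposition with a direct computation on involutive sequences. By the bijection $\psi$, the left-hand side equals $\sum_{H\in\HH_\sym(\mu)} q^{\norm{H}} t^{\pos(H)}$. A symmetric Hermite history on $\mu$ is specified by an independent choice of label in $\{0,1,\dots,\HT(u)-1\}$ at each matched up step $u$, together with a joint choice of involutive sequence of length $k$ on the unmatched up steps. Since matched up steps contribute nothing to $\pos(H)$, the generating function factors as
\[
\left(\prod_{u\in\UP(\mu^+)}\ \sum_{i=0}^{\HT(u)-1} q^i\right) g_k(q,t) = \left(\prod_{u\in\UP(\mu^+)} \qint{\HT(u)}\right) g_k(q,t),
\]
where $g_k(q,t)$ denotes the sum of $q^{a_1+\cdots+a_k} t^{\#\{i\,:\, a_i>0\}}$ over all involutive sequences $(a_1,\dots,a_k)$ of length $k$. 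It therefore suffices to prove $g_k = f_k$.

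For this I would verify that $g_k$ satisfies the same recurrence and initial conditions as $f_k$. Clearly $g_0=1$ (empty sum) and $g_1=1$ (the unique involutive sequence is $(0)$). For $k\ge 2$, the inductive definition of involutive sequences partitions the set of length-$k$ involutive sequences into two classes according to the construction rule used. Appending $0$ to an involutive sequence of length $k-1$ leaves the sum and the number of positive entries unchanged, contributing $g_{k-1}(q,t)$. Appending an integer $r$ with $1\le r\le k-1$ to an involutive sequence of length $k-2$ and inserting a $0$ before the last $r$ entries introduces exactly one positive entry equal to $r$ and one entry equal to $0$, so this multiplies the weight of the shorter sequence by $q^r t$. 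Summing over $r\in\{1,\dots,k-1\}$ yields $tq\qint{k-1}\, g_{k-2}(q,t)$, whence
\[
g_k(q,t) = g_{k-1}(q,t) + tq\qint{k-1}\, g_{k-2}(q,t) = f_k(q,t),
\]
as required.

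The only point that truly needs care is the weight bookkeeping in the second construction rule: one must check that the length-$k$ sequence differs from the length-$(k-2)$ one by exactly the two new entries $r$ and $0$ (the $0$ is inserted, not substituted for an existing value), so that the incremental factor is precisely $q^r t$, with no additional $q$-power coming from repositioning existing entries. Once this is confirmed, the identification $g_k = f_k$ is immediate and the corollary follows.
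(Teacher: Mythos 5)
Your proposal is correct and is essentially the argument the paper intends: the corollary is stated without proof as an immediate consequence of the preceding proposition, via exactly the factorization you describe (matched up steps contributing $\prod[\HT(u)]_q$ independently, and the involutive sequence on unmatched up steps contributing a factor satisfying the recurrence $f_k=f_{k-1}+tq[k-1]_qf_{k-2}$, mirroring the count $v_k=v_{k-1}+(k-1)v_{k-2}$ given in the text). Your careful check that the second construction rule adds exactly one entry $r\ge 1$ and one entry $0$ without altering the existing entries is the right point to verify, and it holds.
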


A \emph{symmetric matching} is a matching on $[\pm n]$ such that if
$\{i,j\}$ is an arc, then $\{-i,-j\}$ is also an arc.  We denote by
$\M_\sym(n)$ the set of symmetric matchings on $[\pm n]$.  Note that
symmetric matchings are in bijection with fixed-point-free involutions
in $B_n$.

Let $M\in\M_\sym(n)$. A \emph{symmetric crossing} of $M$ is a pair of
arcs $\{a,b\}$ and $\{c,d\}$ satisfying $a<c<b<d$ and $b,d>0$. A
symmetric crossing $(\{a,b\},\{c,d\})$ is called \emph{self-symmetric}
if $\{c,d\}=\{-a,-b\}$. We denote by $\cro(M)$ and $\sscr(M)$ the
number of symmetric crossings and self-symmetric crossings of $M$,
respectively.

For a symmetric Dyck path $\mu$ of length $2n$, let
$\M_\sym(\mu)$ denote the set of symmetric matchings $M$ on $[\pm
n]$ such that the $i$th smallest vertex of $M$ is a left vertex of an
arc if and only if the $i$th step of $\mu$ is an up step.

\begin{prop}
We have
\[
\sum_{H\in\HH_\sym(\mu)} q^{\norm H} t^{\pos(H)}
=\sum_{M\in\M_\sym(\mu)} q^{\cro(M)} t^{\sscr(M)}.
\]
\end{prop}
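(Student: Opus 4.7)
The plan is to construct an explicit bijection $\Phi:\HH_\sym(\mu)\to\M_\sym(\mu)$ that adapts the classical Hermite-history-to-matching bijection to the symmetric setting. A symmetric matching is determined by two independent pieces of data: the pattern of arcs lying entirely within the negative half $\{-n,\dots,-1\}$ (mirrored automatically to the positive half), together with an involution describing how the center-crossing arcs are arranged. The symmetric Hermite history encodes precisely these two pieces: the labels on matched up steps of $\mu^+$ encode the negative-half arcs (via the classical bijection), while the involutive sequence of labels on the $k$ unmatched up steps $p_1<\dots<p_k$ encodes the involution $\iota$ on $\{1,\dots,k\}$ governing the center-crossing arcs.

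First, I apply the classical bijection to the labels on matched up steps to obtain arcs inside the negative half and, by reflection, their mirrors in the positive half. Second, I decode the involutive sequence: a fixed point $\iota(i)=i$ (arising from a zero label at position $i$ not paired with a later position) produces the self-symmetric arc $\{p_i,-p_i\}$, while a non-trivial pair $\{i,j\}$ with $i<j$ (arising from the positive label $r=j-i$ at position $j$) produces the mirror pair $\{p_i,-p_j\},\{p_j,-p_i\}$; the construction of the involutive sequence by inserting a matched pair of a $0$ and an $r$ corresponds exactly to creating such a mirror pair. Combining these contributions yields a symmetric matching $M=\Phi(H)$ satisfying the shape condition of $\mu$. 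To see that $\pos(H)=\sscr(M)$: each non-trivial pair of $\iota$ contributes a mirror pair of arcs whose four endpoints $p_i<p_j<-p_j<-p_i$ interleave (arc $\{p_i,-p_j\}$ at positions $1,3$, arc $\{p_j,-p_i\}$ at positions $2,4$), giving exactly one self-symmetric crossing, while each fixed point contributes a self-symmetric arc with no self-symmetric crossing. Thus $\sscr(M)$ equals the number of non-trivial pairs in $\iota$, i.e., the number of positive labels among the unmatched up steps.

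For $\norm(H)=\cro(M)$, I decompose symmetric crossings by arc type: crossings among negative-half inner arcs, crossings involving one inner arc and one center-crossing arc, and crossings between two center-crossing arcs, then use the reflection symmetry of $M$ to collapse mirror pairs of crossings to single representatives. The labels on matched up steps account for crossings among negative-half inner arcs (this is the standard Hermite-to-matching identity), and the labels on unmatched up steps account for the remaining contributions, where each unmatched label tallies the crossings in which the corresponding center-crossing arc plays the role of the later-starting arc. The main obstacle is this final count: one must carefully verify that the label distance $r$ at position $j$ correctly captures the number of additional symmetric crossings created by the mirror pair $\{p_i,-p_j\},\{p_j,-p_i\}$ — including the self-symmetric crossing between these two and the crossings with other center-crossing arcs arising from other pairs of $\iota$ — and that the symmetry-respecting counting of symmetric crossings exactly matches the sum of all labels. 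The inverse map is straightforward: extract the negative-half inner matching from $M$ and invert the classical bijection to recover the matched labels, and read off $\iota$ from the mirror/self-symmetric structure of the center-crossing arcs to recover the involutive sequence.
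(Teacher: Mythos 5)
The paper states this proposition without proof, so your argument has to stand on its own, and it does not: the decomposition on which the entire construction rests is false. You assume that in a symmetric matching $M\in\M_\sym(\mu)$ the center-crossing arcs are precisely the arcs whose left endpoints sit at the \emph{unmatched} up steps of $\mu^+$, so that $M$ splits into two independent pieces (negative-half arcs encoded by the matched labels, and an involution on the unmatched positions encoded by the involutive sequence). But which openers carry center-crossing arcs is a property of $M$, not of $\mu$; only their number $k$ is forced. Concretely, take $n=4$ and $\mu=UUDUDUDD$, so that the openers are $-4,-3,-1,2$ and the closers are $-2,1,3,4$; the unmatched up steps of $\mu^+$ are those at $-4$ and $-1$, and the matched one is at $-3$. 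The matching $M=\{\{-3,1\},\{-1,3\},\{-4,-2\},\{2,4\}\}$ belongs to $\M_\sym(\mu)$, yet its center-crossing arcs start at $-3$ and $-1$ and its purely negative arc starts at the \emph{unmatched} opener $-4$. Your map can never output this $M$ (nor $\{\{-3,3\},\{-1,1\},\{-4,-2\},\{2,4\}\}$), so it is not surjective; and in the other direction the matched label at $-3$ has nothing to encode in your scheme, since the negative half contains only the single closer $-2$ and the inner matching you would build from matched openers is forced, while that label genuinely ranges over two values. The point is that the range $\{0,\dots,h-1\}$ of a matched label counts \emph{all} arcs open at that opener, including center-crossing ones, so the matched labels cannot be decoupled from the center-crossing structure.

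Beyond this, you explicitly leave the identity $\norm{H}=\cro(M)$ unverified, and that count is the substance of the proposition; with the splitting broken, the bookkeeping you sketch cannot be completed as stated. Be aware also that when two distinct, non-mirror center-crossing arcs cross, both that crossing and its mirror image satisfy the defining condition $b,d>0$, so a literal count of $\cro(M)$ sees such a configuration twice while only one unit of label is available on the Hermite-history side (compare $\mu=U^4D^4$ with the matching corresponding to the involution $(1\,4)(2\,3)$); any correct proof must handle symmetric crossings up to the mirror action. A workable route is global rather than split: extend $H$ to a Hermite history on all of $\mu$ via the symmetry of the associated Dyck tiling from the preceding proposition, apply the classical Hermite-history/matching correspondence to the whole path, and only then match statistics. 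The one part of your write-up that does survive is the local verification that a mirror pair of center-crossing arcs produces exactly one self-symmetric crossing; that is correct and yields the $t$-statistic once a genuine bijection is in place.
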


For $D\in\D_\sym(*/\mu)$, let $\HT(D)$ denote the number of unmatched
up steps in $\mu^+$.  Using the result in \cite{Cigler2011} and
\cite{JV_rook} on a generating function for partial matchings we
obtain the following formula.

\begin{prop}
We have
\begin{multline*}
\sum_{D\in\D_\sym(n)} q^{\tiles_+(D)+\tiles_0(D)} t^{\tiles_0(D)} s^{\HT(D)}\\
= \sum_{m=0}^n \frac{s^m f_m(q,t)}{(1-q)^{(n-m)/2}}
\sum_{k\ge0} \left( \binom{n}{\frac{n-k}2} - \binom{n}{\frac{n-k}2-1}\right)
(-1)^{(k-m)/2} q^{\binom{(k-m)/2+1}2} 
\qbinom{\frac{k+m}2}{\frac{k-m}2},
\end{multline*}
where $f_m(q,t)$ is defined in Corollary~\ref{cor:sym}.
\end{prop}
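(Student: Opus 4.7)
The plan is to sum the formula of Corollary~\ref{cor:sym} over all symmetric Dyck paths $\mu$ of length $2n$, reorganize by $m := k(\mu) = \HT(D)$, and identify the remaining inner sum as a classical partial matching generating function.

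First, by Corollary~\ref{cor:sym} applied to each $\mu$ separately,
\[
\sum_{D\in\D_\sym(n)} q^{\tiles_+(D)+\tiles_0(D)} t^{\tiles_0(D)} s^{\HT(D)}
\;=\; \sum_{m=0}^{n} s^m f_m(q,t)\, S_m(n;q),
\]
where $S_m(n;q) := \sum_{\mu\colon k(\mu)=m} \prod_{u\in\UP(\mu^+)} [\HT(u)]_q$, the outer sum being over symmetric Dyck paths $\mu$ of length $2n$.

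The second step is to interpret $S_m(n;q)$. A symmetric Dyck path $\mu$ is determined by its first half $\mu^+$, which in case $k(\mu)=m$ is a ballot path from $(0,0)$ with $(n+m)/2$ up steps and $(n-m)/2$ down steps. Extending the classical Hermite-history-to-matching bijection (used already in the proof of Theorem~\ref{thmmuqint}) to ballot paths, a pair consisting of $\mu^+$ together with labels on its matched up steps (label of $u$ in $\{0,1,\dots,\HT(u)-1\}$) corresponds to a partial matching of $[n]$ with exactly $m$ fixed points: positions of up steps become openers or fixed points, positions of down steps become closers, and the labels resolve which closer each matched opener is paired with. Under this correspondence, $\sum$(labels) goes to the number of crossings $\cro(M)$. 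Hence $S_m(n;q)$ equals the generating function for partial matchings of $[n]$ with $m$ fixed points, weighted by $q^{\cro}$.

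The third step is to invoke the closed form for this generating function given in \cite{Cigler2011} and \cite{JV_rook}, namely
\[
S_m(n;q) = \frac{1}{(1-q)^{(n-m)/2}} \sum_{k\ge 0} \left(\binom{n}{\tfrac{n-k}{2}} - \binom{n}{\tfrac{n-k}{2}-1}\right)(-1)^{(k-m)/2} q^{\binom{(k-m)/2+1}{2}} \qbinom{\tfrac{k+m}{2}}{\tfrac{k-m}{2}}.
\]
Substituting this into the previous display yields the stated identity.

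The main obstacle is Step~2, verifying that $S_m(n;q)$ really is the partial matching generating function; the symmetry of $\mu$ and the fact that the product only ranges over \emph{matched} up steps of $\mu^+$ need to be tracked carefully so that one recovers ordinary partial matchings on $[n]$ (not symmetric matchings on $[\pm n]$). Once this is in hand, the variables $t$ and $s$ decouple: the factor $s^m f_m(q,t)$ accounts for the unmatched up steps (their involutive sequence, hence the $\tiles_0$ and self-symmetric structure), while the $q$-enumeration of the remaining matched part is a known rook/matching identity.
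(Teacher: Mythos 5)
Your proposal is correct and follows essentially the same route the paper intends: the paper's (one-sentence) justification is precisely to sum Corollary~\ref{cor:sym} over all symmetric Dyck paths grouped by the number $m$ of unmatched up steps, recognize the inner sum as the crossing generating function for partial matchings of $[n]$ with $m$ fixed points, and substitute the closed form from \cite{Cigler2011} and \cite{JV_rook}. You have simply spelled out the intermediate bookkeeping (the definition of $S_m(n;q)$ and the ballot-path-to-partial-matching correspondence) that the paper leaves implicit.
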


If $t=s=0$ in the above proposition, then we get the generating
function for the usual Dyck tilings according to the number of tiles.

\end{document}